\DeclareFontFamily{OT1}{rsfs}{}
\DeclareFontShape{OT1}{rsfs}{n}{it}{<-> rsfs10}{}
\DeclareMathAlphabet{\curly}{OT1}{rsfs}{n}{it}
\DeclareFontFamily{U}{mathb}{\hyphenchar\font45}
\DeclareFontShape{U}{mathb}{m}{n}{
      <5> <6> <7> <8> <9> <10> gen * mathb
      <10.95> mathb10 <12> <14.4> <17.28> <20.74> <24.88> mathb12
      }{}
\DeclareSymbolFont{mathb}{U}{mathb}{m}{n}
\newcommand{\eqnum}{\refstepcounter{equation}\textup{\tagform@{\theequation}}}
\renewcommand\;{\hspace{.6pt}}
\newcommand\C{\mathbb C}
\newcommand\R{\mathbb R}
\newcommand\Z{\mathbb Z}
\newcommand\HH{\mathbb H}
\newcommand\PP{\mathbb P}
\newcommand\cO{\mathcal O}
\renewcommand\cH{\mathcal H}
\renewcommand\t{\mathfrak t}
\renewcommand\({\big(}
\renewcommand\){\big)}
\renewcommand\]{\big]}
\newcommand\wt{\widetilde}
\renewcommand{\so}{\ \ext@arrow 0359\Rightarrowfill@{}{\hspace{3mm}}\ }
\newcommand{\rt}[1]{\xrightarrow{\ #1\ }}
\newcommand\To{\longrightarrow}
\newcommand\into{\hookrightarrow}
\newcommand\INTO{\ \ar@{^(->}[r]<-.2ex>}
\newcommand{\Into}{\,\ensuremath{\lhook\joinrel\relbar\joinrel\rightarrow}\,}
\newcommand\Mapsto{\ \longmapsto\ }
\newcommand\Mapsfrom{\ \mathrel{\reflectbox{\ensuremath{\longmapsto}}}\ }
\newcommand\ip{\lrcorner}
\renewcommand\_{^{}_}
\newcommand\take{\!\smallsetminus\!}
\newcommand{\mat}[4]{\left(\begin{array}{cc} \!\!#1 & #2\!\! \\ \!\!#3 &
#4\!\!\end{array}\right)}
\newcommand\<{\langle}
\renewcommand\>{\rangle}
\newcommand\Langle{\big\langle}
\newcommand\Rangle{\big\rangle}
\newfont{\bigtimesfont}{cmsy10 scaled \magstep5}
\newcommand{\bigtimes}{\mathop{\lower0.9ex\hbox{\bigtimesfont\symbol2}}}
\renewcommand\={\ =\ }
\newcommand\udot{^{\bullet}}
\DeclareMathSymbol{\lefttorightarrow}{3}{mathb}{"FC}
\DeclareMathSymbol{\righttoleftarrow}{3}{mathb}{"FD}
\newcommand\vir{\operatorname{vir}}
\newcommand\vd{\operatorname{vd}}
\newcommand\coker{\operatorname{coker}}
\newcommand\im{\operatorname{im}}
\newcommand\id{\operatorname{id}}
\renewcommand\Im{\operatorname{Im}}
\newcommand\Hom{\operatorname{Hom}}
\newcommand\End{\operatorname{End}}
\newcommand\Bl{\operatorname{Bl}}
\renewcommand\Re{\operatorname{Re}}
\renewcommand\t{\mathfrak t}
\newcommand\beq[1]{\begin{equation}\label{#1}}
\newcommand\eeq{\end{equation}}
\newcommand\beqa{\begin{eqnarray*}}
\newcommand\eeqa{\end{eqnarray*}}
\newcommand\arXiv[1]{\href{http://arxiv.org/abs/#1}{arXiv:#1}}
\newcommand\mathAG[1]{\href{http://arxiv.org/abs/math/#1}{math.AG/#1}}
\DeclareRobustCommand{\SkipTocEntry}[3]{}
\newcommand\@dotsep{4.5}
\def\@tocline#1#2#3#4#5#6#7{\relax
  \ifnum #1>\c@tocdepth 
  \else
    \par \addpenalty\@secpenalty\addvspace{#2}%
    \begingroup \hyphenpenalty\@M
    \@ifempty{#4}{%
      \@tempdima\csname r@tocindent\number#1\endcsname\relax
    }{%
      \@tempdima#4\relax
    }%
    \parindent\z@ \leftskip#3\relax \advance\leftskip\@tempdima\relax
    \rightskip\@pnumwidth plus1em \parfillskip-\@pnumwidth
    #5\leavevmode #6\relax
    \leaders\hbox{$\m@th
      \mkern \@dotsep mu\hbox{.}\mkern \@dotsep mu$}\hfill
    \hbox to\@pnumwidth{\@tocpagenum{#7}}\par
    \nobreak
    \endgroup
  \fi}
\makeatletter \@addtoreset{equation}{section} \makeatother
\renewcommand{\theequation}{\thesection.\arabic{equation}}
\newtheorem*{thm*}{Theorem}
\newtheorem{lem}[equation]{Lemma}
\newtheorem{cor}[equation]{Corollary}
\newtheorem{prop}[equation]{Proposition}
\definecolor{darkgreen}{rgb}{0,.5,0}
\theoremstyle{definition}
\newtheorem{ex}[equation]{Example}
\newtheorem{rmk}[equation]{Remark}
\title[A Hopf index for isotropic sections]
{A Hopf index for isotropic sections of orthogonal bundles\vspace{-2mm}}
\author[M. Kool, J. Oh, J. V. Rennemo and R. P. Thomas]{Martijn Kool, Jeongseok Oh, J{\o}rgen Vold Rennemo and Richard Thomas}
\begin{document}
\begin{abstract} \noindent The Hopf index equates the multiplicity of a zero of a section of a vector bundle with a winding number. We give eight analogues for isotropic sections of bundles with quadratic form. There are applications to cosection localised 
virtual cycles and to DT$^4$ virtual cycles.\vspace{-2mm}\end{abstract}
\maketitle


Let $(E,q,o)$ be an $SO(2n,\C)$ bundle over a smooth manifold $Y$, where $q$ is a nondegenerate complex quadratic form and $o\in\Gamma\(\bigwedge^{2n}E\)$ is a complex orientation in the sense of \cite[Definition 2.1]{OT1}.

Suppose $s\in\Gamma(E)$ is \emph{isotropic}: $q(s,s)=0$. When its zero locus $Z(s)$ has the expected complex codimension $n$ there is a way to assign a multiplicity or cycle class $\sqrt e\;(E,s)$ to it \cite{EG,BJ,OT1}. When $(Y,E,s)$ are holomorphic and $Z(s)$ is smooth and connected this class is the fundamental class\footnote{This follows from \cite[Lemma 3.5]{OT1} applied to the maximal isotropic subbundle $\Lambda:=\im\(ds\colon T_Y|_{Z(s)}\to E|_{Z(s)}\)$ of $E|_{Z(s)}$, combined with \eqref{H3} below.}
$$
\sqrt e\;(E,s)\=\pm[Z(s)]
$$
up to sign, but in general is multiplied by a multiplicity. We give a number of formulae \eqref{OH1} to \eqref{OH8} for this multiplicity, and find it has some surprising features. The simplest case is when $Y=\C^2$ over which $E=\underline\C^4$ and $s$ has zeros only at the origin $0\in\C^2$. Restricting $s$ to the link $S^3\subset\C^2$ of $0\in\C^2$ defines a map to the quadric $Q=\PP^1\times\PP^1$ of isotropic lines in $\C^4$,
$$
\PP\(s|_{S^3}\)\ \colon\ S^3\ \To\ Q\ \cong\ \PP^1\times\PP^1\ \subset\ \PP(\C^4).
$$
Writing its homotopy class as
$$
\big[\PP\(s|_{S^3}\)\big]\ \in\ \pi_3(\PP^1\times\PP^1)\ \cong\ \Z\oplus\Z\ \ni\ (d_1,d_2),
$$
the formula \eqref{OH25} below for the multiplicity assigned to $Z(s)$ is
$$
\sqrt e\;(E,s)\=d_1-d_2.
$$
\emph{This can be zero}! For instance consider the isotropic section
\beq{eg}
s\=(x^2,y^2,xy,-xy)\ \text{ of }\ \underline\C^4\ \text{ over }\ Y\,=\,\C^2_{x,y}
\eeq
(with quadratic form $q=XY+ZW$ in the obvious notation).
We will calculate the bidegree to be $(d_1,d_2)=(1,1)$ so the multiplicity \emph{vanishes},
$$
\sqrt e\;(E,s)\=1-1\=0\ \ne\ 3\=\operatorname{length}Z(s).
$$
In this paper we will explain this phenomenon and give a number of ways to calculate $\sqrt e\;(E,s)$ via various ``orthogonal Hopf index theorems". We begin by listing eight versions of the classical Hopf index theorem before stating orthogonal analogues.

\section{Classical Hopf index}
The setting for the Hopf index \cite{H} is the following local data and notation.
\begin{itemize}
\item A rank $2n$ oriented real vector space $E$ with unit sphere $S(E)\cong S^{2n-1}$,
\item the trivial $SL(2n,\R)$ bundle $\underline E=E\times\R^{2n}$ over $\R^{2n}$,
\item a section $s\in\Gamma(\underline E)$ with zero locus $Z(s)=\{0\}\subset\R^{2n}$.
\end{itemize}
The standard orientation on $\R^{2n}$ defines a local intersection number of the graph $\Gamma_{\!s}\subset\underline E$ and the zero section $0_{\underline E}\subset\underline E$\,---\,a multiplicity on $Z(s)=\{0\}$ which we denote
\begin{fleqn}\begin{equation*}
\textbf{Hopf index}\hspace{3cm} 
e(\underline E,s)\ :=\ \Gamma_{\!s}\cdot0_{\underline E}\ \in\ \Z.
\end{equation*}\end{fleqn}
It is a localisation of the Euler class $e(\underline E)$ to $Z(s)$, a deformation invariant count of the number of zeros of $s$. We list some of the many ways to calculate it.\medskip

\noindent Restricting $s$ to the link $S^{2n-1}\subset\R^{2n}$ of $Z(s)=\{0\}$ and projecting it to $\underline E|\_0=E$,\vspace{-1mm}
\begin{fleqn}\beq{H1}\tag{{\bf H1}}
\textbf{Winding number}\qquad e(\underline E,s)\,=\,\deg s\,:=\,\big[s|_{S^{2n-1}}\big]\,\in\,\pi_{2n-1}(S(E))\,=\,\Z.
\eeq\end{fleqn}
Fixing an oriented complex structure on $E$ with space of complex lines $\PP(E)\cong\PP^{n-1}$,\vspace{-1mm}
\begin{fleqn}\beq{H2}\tag{{\bf H2}}
\textbf{Projectivisation}\qquad 
e(\underline E,s)\,=\,\big[\PP\(s|_{S^{2n-1}}\)\big]\,\in\,\pi_{2n-1}(\PP(E))\,\cong\,\Z.
\eeq\end{fleqn}
Writing $\R^{2n}$ as $\C^n$ and supposing $s$ is holomorphic from now on,\vspace{-1mm}
\begin{fleqn}\beq{H3}\tag{{\bf H3}}
\textbf{Holomorphic case}\qquad 
e(\underline E,s)\,=\,\operatorname{length}\,Z(s)\ \ge\ 0.
\eeq\end{fleqn}
Choosing holomorphic $s_t$ with simple zeros close to $0\in\C^n$ for $t\ne0$ and $s_0=s$,\vspace{-1mm}
\begin{fleqn}\beq{H4}\tag{{\bf H4}}
\textbf{Deforming to simple zeros}\qquad 
e(\underline E,s)\,=\,\#\big\{p\in\C^n\,\colon\,s_t(p)=0\big\}\,\ge\,0.
\eeq\end{fleqn}
If $s$ is homogeneous then $s|_{S^{2n-1}}$ descends to $\underline{s}\colon\PP^{n-1}\to\PP(E)$ and, if $n > 1$,\vspace{-1mm}
\begin{fleqn}\beq{H5}\tag{{\bf H5}}
\textbf{Homogeneous case}\qquad e(\underline E,s)\,=\,
(\deg\,\underline s)^{\frac n{n-1}}\,=\bigg(\int_{\PP^{n-1}}\underline s^*[\mathrm{pt}]\bigg)^{\frac n{n-1}}.
\eeq\end{fleqn}
For general holomorphic $s$ we instead use the fundamental class of the projectivised normal cone\footnote{The normal cone $C_{Z(s)/\C^n}\subset E|_{Z(s)}$ is the flat schematic limit $\lim_{t\to\infty}\Gamma_{\!ts}\subset E$. Then \eqref{H6} says that $[\PP(C_{Z(s)/\C^n})]$ is the cycle $[\PP(E_0)]=[\PP^{n-1}]$ with multiplicity $e(\underline E,s)$.} $[\PP(C_{Z(s)/\C^n})]\in H_{2n-2}\(\PP(E)|_{Z(s)}\)\cong H_{2n-2}\(\PP^{n-1}\)$,\vspace{-1mm}
\begin{fleqn}\beq{H6}\tag{{\bf H6}}
\textbf{Projective normal cone}\quad e(\underline E,s)\,=\,
\big[\PP\(C_{Z(s)/\C^n}\)\big]\in H_{2n-2}\(\PP^{n-1}\)\,\cong\,\Z.
\eeq\end{fleqn}
Since codim$\(Z(s)\subset\C^n\)=n$ the Koszul complex $\(\Lambda\udot \underline E^*,s\)$ resolves $\cO_{Z(s)}$ and\vspace{-1mm}
\begin{fleqn}\beq{H7}\tag{{\bf H7}}
\textbf{$K$-theoretic}\qquad e(\underline E,s)\,=\,\big[\(\Lambda\udot \underline E^*,s\)\big]\,\in\,K\(\mathrm{Coh}\;\{0\}\)\,=\,\Z.
\eeq\end{fleqn}
If $(\underline E,s)$ is equivariant with respect to a $T=\C^*$ action on $\C^n$ with $(\C^n)^T=\{0\}$,\vspace{-1mm}
\begin{fleqn}\beq{H8}\tag{{\bf H8}}
\textbf{Torus localisation}\qquad e(\underline E,s)\,=\,
e^T\!\(\underline E|\_0\)\big/e^T\(N_{0/\C^n}\).
\eeq\end{fleqn}
\vspace{-5mm}

\section{Orthogonal Hopf index}\label{OHI}
To state the orthogonal analogues of these results we need some data and notation.
\begin{itemize}
\item $(E,q,o)$ is a rank $2n\ge4$ complex vector space with nondegenerate quadratic form $q$ and compatible orientation $o\in\Lambda^{2n}E$ \cite[Definition 2.1]{OT1}.
\item $E_{\R}\subset E$ is a maximal positive definite subspace, so $E=E_{\R}\oplus iE_{\R}=E_{\R}\otimes\_{\R}\C$.
\item $\Lambda_\pm\subset E$ are maximal isotropic of \emph{sign} $(-1)^{|\Lambda_\pm|}=\pm1$ \cite[Definition 2.2]{OT1}.
\item $\underline E=E\times\C^n$ is the trivial $SO(2n,\C)$ bundle over $\C^n$.
\item $s\in\Gamma(\underline E)$ is \emph{isotropic} $q(s,s)=0$ and nonzero on $\C^n\take\{0\}$.
\end{itemize}
Then \cite[Section 3.2]{OT1} uses cosection localisation \cite{KL} to  assign a multiplicity\footnote{This is only strictly correct for $s$ holomorphic. For $s\in C^\infty(E)$ we take \eqref{OH1} as the definition, following \cite{BJ}. The definitions of \cite{BJ, OT1} are shown in \cite{OT2} to coincide when $s$ is holomorphic.}
\begin{fleqn}\begin{equation*}
\textbf{Orthogonal Hopf index}\hspace{6mm} \sqrt e(\underline E,s)\,\in\,\Z \quad\text{to the point }\{0\}\,=\,Z(s)\,\subset\,\C^n.
\end{equation*}\end{fleqn}\vskip 1mm

\begin{thm*} $\sqrt e(\underline E,s)$ is equal to \eqref{OH1}--\eqref{OH8} below.
\end{thm*}\vskip 1mm

\noindent More precise statements, fuller explanations and proofs follow later in the paper. \medskip

\noindent Writing $s=(s_+,s_-)$ with respect to $\underline E=\underline E_\R\oplus i\underline E_\R$,
\begin{fleqn}\beq{OH1}\tag{{\bf OH1}}
\textbf{Winding number}\ \,
\sqrt e\;(\underline E,s)=\deg s_+:=\big[s_+|_{S^{2n-1}}\big]\in\pi_{2n-1}(S(E_\R))=\Z,
\eeq\end{fleqn}
\begin{flalign}\label{OH15}\tag{{\bf OH1}$\mathbf{\tfrac12}$}
&& \sqrt e\;(\underline E,s)=\deg s_-:=\big[s_-|_{S^{2n-1}}\big]\in\pi_{2n-1}(S(iE_\R))=\Z.
\end{flalign}
Projecting $s$ to the quadric of isotropic lines $Q\subset\PP(E)=\PP^{2n-1}$ defines $\big[\PP\(s|_{S^{2n-1}}\)\big]$ in $\pi_{2n-1}(Q)\cong\Z\oplus (\Z/2\Z)$ (or $\Z\oplus\Z$ if $n=2$). Writing it as $(s_1,s_2)$,
\begin{fleqn}\beq{OH2}\tag{\textbf{OH2}}
\textbf{Projectivisation} \hspace{2cm}\sqrt e\;(\underline E,s)\,=\,s_1.
\eeq\end{fleqn}
Writing it (non-uniquely) in terms of different generators $[S(\Lambda_\pm)]$ of $\pi_{2n-1}(Q)$,
\beq{OH25}\tag{{\bf OH2}$\mathbf{\tfrac12}$}
\text{if}\ \big[\PP\(s|_{S^{2n-1}}\)\big]\,=\,d_1\big[S(\Lambda_+)\big]+d_2\big[S(\Lambda_-)\big]\,\text{ then }\,\sqrt e(\underline E,s)\,=\,d_1-d_2.
\eeq
If $s$ is holomorphic and factors through a maximal isotropic subbundle $\underline\Lambda\subset\underline E$,\vspace{-1mm}
\begin{fleqn}\beq{OH3}\tag{\textbf{OH3}}
\textbf{Holomorphic}\qquad \sqrt e\;(\underline E,s)\=(-1)^{|\underline\Lambda|\;}\mathrm{length}\,Z(s). 
\eeq\end{fleqn}
More generally this fails, as in the example \eqref{eg} with $n=2$ and $s=(x^2,y^2,xy,-xy)$,\vspace{-1mm}
\begin{fleqn}\begin{equation}\tag{{\;\bf OH3\hspace{-9.5mm}-----\hspace{-1mm}---\!\;\;}}\label{OH3'}
\textbf{Holomorphic}\mathbf{'}\qquad \sqrt e\;(\underline E,s)\,=\,0\,\ne\,\pm\;3\,=\,\pm\operatorname{length}Z(s)\ \text{ for }\eqref{eg}.
\eeq\end{fleqn}
If $s_t$ is holomorphic, isotropic with $s_0=s$ and simple zeros near $0\in\C^n$ for $t\ne0$,\vspace{-1mm}
\begin{fleqn}\beq{OH4}\tag{{\bf OH4}}
\textbf{Deforming to simple zeros}\quad \sqrt e\;(\underline E,s)\,=\!\sum_{p\in\C^n\colon s_t(p)=0}(-1)^{|\Lambda_p|},
\eeq\end{fleqn}
where $\Lambda_p\subset\underline E|_p$ is the maximal isotropic subspace $\im \(ds_t\colon T_p\;\C^n\to\underline E|_p\)$.\smallskip

\noindent For $s$ homogeneous $s|_{S^{2n-1}}$ descends to $\underline{s}\colon\PP^{n-1}\to Q$. Setting $d_\pm:=\int_{\PP^{n-1}}\underline s^*\big[\PP(\Lambda_\pm)\big]$,\vspace{-1mm}
\begin{fleqn}\beq{OH5}\tag{{\bf OH5}}
\textbf{Homogeneous case}\qquad \sqrt e\;(\underline E,s)\,=(-1)^{n-1}(d_++d_-)^{\frac1{n-1}}(d_+-d_-).
\eeq\end{fleqn}
For more general $s$ we instead use the (isotropic!) projective normal cone $\PP(C_{Z(s)/\C^n})$ $\subset\PP\(\underline E|_{Z(s)}\)$, supported on $Q\subset\PP(E)$ with class $\big[\PP(C_{Z(s)/\C^n})\big]\in H_{2n-2}(Q)$.\vspace{-1mm}
\begin{fleqn}\beq{OH6}\tag{{\bf OH6}}
\textbf{Normal cone}\qquad
\sqrt e\;(E,s)\=(-1)^{n-1}\int_{[\PP(C_{Z(s)/\C^n})]}\big[\PP(\Lambda_+)\big]-\big[\PP(\Lambda_-)\big].
\eeq\end{fleqn}
Let $M=M^+\oplus M^-$ be the irreducible representation of $\operatorname{Cliff}(E,q)$. Clifford multiplication by $s$ defines a 2-periodic complex $\underline M\udot=\dots\rt{s\cdot}\underline M^+\rt{s\cdot}\underline M^-\rt{s\cdot}\dots
$ with cohomology supported on $Z(s)=\{0\}\subset\C^n$. Then\vspace{-1mm}
\begin{fleqn}\beq{OH7}\tag{{\bf OH7}}
\textbf{$K$-theoretic}\qquad \sqrt e\;(\underline E,s)\,=\,\cH^+(\underline M\udot)\,-\,\cH^-(\underline M\udot)\,\in\,K\(\mathrm{Coh}\;\{0\}\)\,=\,\Z.
\eeq\end{fleqn}
If $(\underline E,q,s)$ is equivariant with respect to a $T=\C^*$ action on $\C^n$ with $(\C^n)^T=\{0\}$,\vspace{-1mm}
\begin{fleqn}\beq{OH8}\tag{{\bf OH8}}
\textbf{Torus localisation}\qquad \sqrt e\;(\underline E,s\,)\,=\,
\sqrt e^T\!\(\underline E|\_0\)\big/e^T\(N_{0/\C^n}\).
\eeq\end{fleqn}
\vspace{-5mm}

\section{Refinement when $n=2$}
When $n=2$ these quantities can be refined to a pair of topological integers $(d_1,d_2)$, most easily defined by \eqref{RH2} below, whose difference is $\sqrt e\;(\underline E,s)=d_1-d_2$. In the notation and numbering of Section \ref{OHI}, these can be rewritten as follows.

In \eqref{ZZ} we describe a homotopy equivalence from the space of nonzero isotropic vectors in $\C^4$ to $S(E_\R)\times\PP(\Lambda_-)\cong S^3\times S^2$, with $\pi_3=\Z\oplus\Z\ni(s_1,s_2)=\big[s|_{S^3}\big]$.
\beq{RH1}\tag{{\bf RH1}}
(d_1,d_2)\=(s_1+s_2,s_2).\vspace{-2mm}
\eeq
\beq{RH2}\tag{{\bf RH2}}
\big[\PP\(s|_{S^3}\)\big]\=(d_1,d_2)\,\in\,\pi_3(Q)\,=\,\pi_3\(\PP(\Lambda_+)\times\PP(\Lambda_-)\)\=\Z\oplus\Z.
\eeq
If $s$ is holomorphic, write $s=(f\sigma,\,g\;\sigma\ip\,\omega)\in\Gamma\(\underline\Lambda\oplus\underline\Lambda^*)$ as explained in (\ref{sigtau}, \ref{ip}).
\beq{RH3}\tag{{\bf RH3}}
d_1\,=\,\operatorname{length}Z(\sigma)\,\ge\,0, \qquad
d_2\,=\,\operatorname{length}Z(f,g)\,\ge\,0.
\eeq
We prove\footnote{In \eqref{OH4} we \emph{assumed} that there exists such an isotropic deformation to simple zeros; we can currently \emph{prove} one exists only for $n=2$.} there exists a 
holomorphic, isotropic deformation $s_t$ of $s_0=s$ with simple zeros near $0\in\C^2$ for $t\ne0$. Then $d_1,d_2$ count the zeros $\big\{p\in s_t^{-1}(0)\big\}$ of $s_t$ by sign,
\beq{RH4}\tag{{\bf RH4}}
d_1\,=\,\#\big\{p\ \colon(-1)^{|\Lambda_p|}=+1\big\}, \qquad
d_2\,=\,\#\big\{p\ \colon(-1)^{|\Lambda_p|}=-1\big\}.
\eeq
For $s$ homogeneous $s|_{S^3}$ descends to $\underline{s}\colon\PP^1\to Q$. Setting $d_\pm:=\int_{\PP^1}\underline s^*\big[\PP(\Lambda_\pm)\big]$,\vspace{-1mm}
\beq{RH5}\tag{{\bf RH5}}
d_1\=d_-^2\;, \qquad
d_2\=d_+^2\;.
\eeq
Again for $s$ homogeneous we have $d_1=d_-^2$ and $d_2=d_+^2$ where, in $H^2(Q,\Z)$,
\beq{RH6}\tag{{\bf RH6}}
[\PP(C_{Z(s)/\C^2})]\=(d_++d_-)\Big(d_-\big[\PP(\Lambda_+)\big]+d_+\big[\PP(\Lambda_-)\big]\Big).
\eeq
In terms of the 2-periodic Clifford complex $\underline M\udot$,
\beq{RH7}\tag{{\bf RH7}}
d_1\,=\,\operatorname{length}\,\cH^+(\underline M\udot)\,\ge\,0, \qquad
d_2\,=\,\operatorname{length}\,\cH^-(\underline M\udot)\,\ge\,0.
\eeq
If $(\underline E,s)=\(\!\Hom(\underline M^+,\underline M^-),\,F\otimes v\)$ is equivariant with respect to a $T=\C^*$ action on $\C^2$ with $(\C^2)^T=\{0\}=Z(F)=Z(v)$,
\beq{RH8}\tag{{\bf RH8}}
d_1\=e^T(M^-)\big/e^T(N_{0/\C^2}),\qquad d_2\=e^T\((M^+)^*\)\big/e^T(N_{0/\C^2}).
\eeq

\vspace{-2mm}
\setcounter{tocdepth}{1}
\tableofcontents

\section{Running example}
Throughout we will illustrate the results with the following generalisation of \eqref{eg}. Fix integers $d>0$ and $0\le i,j\le d$. We work over the base $\C^2$ with coordinates $x,y$; these will have weights $1,-1$ respectively when we want to use a $\C^*$ action on $\C^2$. We set $E=\C^4$ with coordinates $X,Y,Z,W$, quadratic form $XY+ZW$ and complex orientation (in the sense of \cite[Definition 2.1]{OT1}) $o=\partial_X\wedge\partial_Y\wedge\partial_Z\wedge\partial_W$. We consider the following isotropic section of $\underline E$,
\beq{run}
(x^d,\,y^d,\,x^iy^j,\,-x^{d-i}y^{d-j})\ \text{ of }\ \underline E\,=\,\underline\C^4\,=\,\underline\t^{\;d}\oplus\underline\t^{-d}\oplus\underline\t^{\;i-j}\oplus\underline\t^{\;j-i}.
\eeq
Here $\t$ denotes the standard weight 1 irreducible representation on $\C^*$, so we have given $E$ a $\C^*$ action (that makes $s$ equivariant) that will only be used in the discussion of \eqref{OH8}. Setting $d=2,\,i=1=j$ gives back \eqref{eg}.

We will repeatedly calculate the invariants for this example, via many different methods. Perhaps the easiest are (\ref{OH4}, \ref{RH4}) by deforming $s$ (non-equivariantly!) through the isotropic sections
$$
\bigg(\mathop{{\textstyle\prod}}_{k=1}^d(x-a_k),\ \ \mathop{{\textstyle\prod}}_{k=1}^d(y-b_k),\ \ \mathop{{\textstyle\prod}}_{k\le i}(x-a_k)\cdot\mathop{{\textstyle\prod}}_{\ell\le j}(y-b_\ell),\ \ -\mathop{{\textstyle\prod}}_{k>i}(x-a_k)\cdot\mathop{{\textstyle\prod}}_{\ell>j}(y-b_\ell)\bigg)
$$
with\vspace{-1mm}
\begin{align*}
i(d-j)&\text{\ positive simple zeros at }\,(a_k,b_\ell)_{k\le i,\,\ell>j},\text{ and}\\
j(d-i)&\text{ negative simple zeros at }(a_k,b_\ell)_{k> i,\,\ell\le j}.
\end{align*}
Here the sign of a simple zero $p\in\C^2$ is the sign \cite[Definition 2.2]{OT1} of the maximal isotropic subspace im$\,ds_t|_p\subset\underline E_p$.
Thus the refined invariants of this example are
\beq{result1}
(d_1,d_2)\=\(i(d-j),\,j(d-i)\)
\eeq
so that
\beq{result2}
\sqrt e\;(\underline E,s)\=d_1-d_2\=d(i-j).
\eeq

\begin{rmk} \label{rmk}
For simplicity we fix $d=2,\,i=1=j$ to return to example \eqref{eg}. Here a length 3 zero locus with ideal $(x^2,y^2,xy)$ is deformed to 2 simple zeros. 
This failure of naive deformation invariance is possible because\,---\,if we ignore the isotropic condition\,---\,we have virtual dimension $-2<0$ (from 4 equations in 2 unknowns).\footnote{For a simpler example consider the two equations $x=0=x-a$ in one variable $x$. This cuts out 1 point for $a=0$ and the empty scheme for $a\ne0$.}

Insisting that the section be isotropic imposes relations between these 4 equations, which is what allows for a theory \cite{OT1} in which this problem can be considered to have virtual dimension zero. But the relations are \emph{nonlinear} so we cannot find a rank 2 subbundle $\underline\Lambda\subset\underline E$ in which the equations lie. (If we could, the equations would form a length two regular sequence, restoring naive deformation invariance so that 3 could not become 2 or $1-1=0$.) Thus the failure \eqref{OH3'} of the equality \eqref{OH3} shows that, even locally around $Z(s)$,
$$
\textit{there does not exist a maximal isotropic subbundle
$\underline\Lambda\subset\underline E$ containing $s$.}
$$

We \emph{can} interpret this zero virtual dimension in a Behrend-Fantechi-like way (using real Kuranishi spaces) if we use \emph{real} equations \cite{BJ}, but lengths of real subschemes are not deformation invariant.
\end{rmk}

\begin{rmk}
When $n=2$ we prove in Section \ref{defsz} that \emph{any} holomorphic  isotropic $s$ can be deformed through other holomorphic isotropic sections to have simple zeros near $0\in\C^2$. For $n>2$ the computation \eqref{ZZ} of $\pi_{2n-1}\(\wt Q\)$ below shows there is no topological obstruction to deforming $s$ to have simple zeros, so it can be done with $C^\infty$ sections. We would like to be able to deduce from an Oka principle that it can be done holomorphically too, but we currently cannot because $\big\{v\in E\colon q(v,v)=0\big\}$ is not an Oka manifold (it is singular at the origin!) and there is currently no good theory of singular Oka spaces.
\end{rmk}

\begin{ex}
The following is the ``smallest'' example where it is unclear whether or not we can deform through holomorphic isotropic sections to one with simple zeros.
Over $\C^3$ consider the section
\beq{eg2}
s \= \(x^2, y^2, z^2, xy, xz, yz\),
\eeq
of $\underline{\C}^6$ which is isotropic with respect to the quadratic form
$$
q \= X_1X_2 + X_1X_3 + X_2X_3 - X_4^2 - X_5^2 - X_6^2.
$$
Using \eqref{OH5} it is easy to calculate $d_+ = 3$ and $d_- = 1$ (or vice versa, if we change the orientation). Hence the orthogonal Hopf index is
$$
\sqrt e\(\underline{\C}^6,s\) \= \pm4 \= \pm\operatorname{length}Z(s).
$$
The scheme $Z(s)$ is not lci, so $s$ cannot factor through any maximal isotropic subbundle of $\underline{\C}^6$.
Thus the converse to \eqref{OH3} fails\,---\,the equality
$$
\sqrt e\(\underline E, s\) \= \pm \operatorname{length} Z(s)
$$
can hold even if $s$ does not factor through a maximal isotropic subbundle. \smallskip

Another numerical invariant of $Z(s)$ (which is not deformation invariant, however) is its Hilbert--Samuel multiplicity or, equivalently, the Segre class $s(Z(s),\C^{n}):=\int_{\PP(C)}H^{n-1}$.\footnote{By \cite[Example 4.2.6 (a)]{fulton}, the class $s(Z(s),\C^n)$ depends only on the scheme $Z(s)$ and not on its embedding in $\C^n$.} 
Here $C:=C_{Z(s)/\C^n}$ and $H$ is the hyperplane class on $Q$, which satisfies $H^{n-1}=[\PP(\Lambda_+)]+[\PP(\Lambda_-)]$. The quadric $Q$ is a homogeneous space for $SO(2n,\C)$, so the intersection product of any two effective $(n-1)$-cycles is nonnegative.
In particular $\int_{\PP(C)}[\PP(\Lambda_\pm)] \ge 0$, and so \eqref{OH6} gives the bound
\beq{inequal}
\big|\sqrt e\;(\underline E,s)\big|\ \le\ s(Z(s),\C^n).
\eeq
By \eqref{prod} below equality holds if and only if $[\PP(C)]\in H^{2n-2}(Q)$ is either a multiple of $[\PP(\Lambda_-)]$ or of $[\PP(\Lambda_+)]$. Of course this is true if $s$ factors through a maximal isotropic subbundle. In our example \eqref{eg2} the inequality $4\le8 = s(Z(s),\C^3)$ is not saturated, again confirming that $s$ does not factor through a maximal isotropic. Finally note that when $Z(s)$ is lci, $s(Z(s),\C^n)=$ length $Z(s)$ so that the bound \eqref{inequal} becomes $|\sqrt e\;(\underline E,s)|\le$ length$\,Z(s)$.
\end{ex}

\begin{rmk}
Throughout we work with $E=\C^{2n}$ with $n\ge2$. The $n=1$ case is trivial because $SO(2,\C)=\C^*$, all $SO(2,\C)$ bundles split as $L\oplus L^*$ with $L$ (respectively $L^*$) a positive (respectively negative) maximal isotropic subbundle, isotropic sections $s$ are either $(\sigma,0),\ \sigma\in\Gamma(L)$ or $(0,\tau),\ \tau\in\Gamma(L^*)$ and $\sqrt e\;(L\oplus L^*,s)$ is either $e(L,\sigma)$ or $-e(L^*,\tau)$. And in the odd rank case $E=\C^{2n+1}$ the relevant square root Euler class is 2-torsion and we know of no nice analogue of the local results \eqref{OH1} to \eqref{OH8}.
\end{rmk}

\section{Homotopy groups}\label{hg}
In this Section we prove \eqref{OH1} to \eqref{OH25}. We take $E=\C^{2n} = \R^{2n} \oplus i\R^{2n}$ with $n\ge2$.
The usual inner product $\langle\,\cdot\,,\,\cdot\,\rangle$ on $\R^{2n}$ complexifies to a complex bilinear form on $\C^{2n}$ whose quadratic form is
\beq{otimesC}
q(a+ib,a+ib)\=|a|^2+2i\langle a,b\rangle-|b|^2.
\eeq
Therefore the isotropic vectors of total norm 1 in $\C^{2n}$ form the space
\beq{wtQ}
\wt Q\ :=\ \big\{(a,b)\ \colon\, |a|^2=\tfrac12=|b|^2,\ \langle a,b\rangle=0\big\}\ \subset\ \C^{2n}.
\eeq
The $\C^*$ action on $\C^{2n}$ restricts to a free $S^1$-action on $\wt Q$,
\beq{acts}
e^{i\theta}\ \colon\ (a,b)\Mapsto(a\cos\theta-b\sin\theta,\ b\cos\theta +a\sin\theta)
\eeq
with quotient the quadric $Q\subset\PP^{2n-1}$ of isotropic lines $\C\langle a+ib\rangle\le\C^{2n}$, so\footnote{Since $Q$ and $\wt Q$ are simply connected their homotopy groups are independent of basepoint.}
$$
\pi_{2n-1}\(\wt Q\)\=\pi_{2n-1}(Q).
$$
The bundle structure
\beq{fib}
\xymatrix@R=18pt@C=12pt{S^{2n-2} \ar[r]& \wt Q \ar[d]_-{p} && (a,b) \ar@{|->}[d] && (a,-Ja) \\
& S^{2n-1} \ar@/^{-2ex}/[u]_{-J} && a && a \ar@{|->}[u]}
\eeq
exhibits $\wt Q$ as the sphere bundle $S\(T_{S^{2n-1}}\)$ of the tangent bundle of $S^{2n-1}\ni a$. The section is produced by choosing any orthogonal oriented\footnote{Orthogonal means $\<Ja,Jb\>=\<a,b\>$ for all $a,b\in\R^{2n}$. Oriented means there is a positively oriented basis of the form $\{e_1, Je_1, \dots, e_n, Je_n\}$. The minus sign in the definition of the section will be explained by \eqref{LJ}.} complex structure $J$ on $\R^{2n}$. It splits the long exact sequence of homotopy groups of the fibration \eqref{fib}, so
\begin{eqnarray} \nonumber
\pi_{2n-1}(Q)\=\pi_{2n-1}\(\wt Q\) &\=& \pi_{2n-1}(S^{2n-1})\oplus\pi_{2n-1}(S^{2n-2}) \\ \label{ZZ}
&\=& \left\{\!\!\begin{array}{lc}\Z\oplus\Z & n=2, \\
\Z\oplus (\Z/2\Z) & n>2.\end{array}\right.
\end{eqnarray}
Now projection of $\pi_{2n-1}(Q)$ to the first $\Z$ factor above is $p_*$, so by \eqref{fib},
\beq{sss}
\big[s|_{S^{2n-1}}\big]\ \in\ \pi_{2n-1}\(\wt Q\)\rt{p_*}\pi_{2n-1}(S^{2n-1})\ \ni\ \big[s_+|_{S^{2n-1}}\big].
\eeq
Here $\pi_{2n-1}(S^{2n-1})$ is really the homotopy class of maps from the base $S^{2n-1}\subset\R^{2n}$ to the sphere in the first summand of fibre $\C^{2n}=\R^{2n}\oplus i\R^{2n}$. Any identification of these two spheres fixes an isomorphism $\pi_{2n-1}(S^{2n-1})\cong\Z$. This isomorphism depends only on the relative orientations of the spheres. 

On the fibres we use the standard orientation $o\_\R=e_1\wedge_\R\dots\wedge_\R e_{2n}$ on $\R^{2n}$, which induces both the complex orientation $o=e_1\wedge\dots\wedge e_{2n}$ on $\C^{2n}$ (as in \cite[Section 2.2]{OT1}) and an orientation on $S^{2n-1}\subset\R^{2n}$ by contracting with the outward pointing normal. We do the same for the base so that identifying the two $S^{2n-1}\subset\R^{2n}$\;s in the standard way fixes the standard isomorphism $\pi_{2n-1}(S^{2n-1})\cong\Z$.

So we may take the right hand side of \eqref{sss} as the definition of $\sqrt e\;(\underline E,s)\in\Z$ for now\footnote{For $s$ holomorphic the definition \cite[Section 3.2]{OT1} of $\sqrt e\;(\underline E,s)$ is reviewed in Section \ref{cosec}. It uses cosection localisation \cite{KL} and a splitting $E=\Lambda\oplus\Lambda^*$ into maximal complex isotropic subspaces in place of the real splitting $E=E_\R\oplus iE_\R$ used here. (The former corresponds to writing the quadratic form as $q=\sum_{i=1}^nX_iY_i$, the latter as $q=\sum_{i=1}^{2n}X_i^2$.) Replacing $o\_\C$ by $-o\_\C$ changes $\sqrt e\;(\underline E,s)$ to $-\sqrt e\;(\underline E,s)$. The two multiplicities are the same $\sqrt e\;(\underline E,s)=\deg s_+$ by \cite{OT2}.}
as in \eqref{OH1}, and then \eqref{sss} shows it equals \eqref{OH2}.


To prove \eqref{OH15} take $\theta=\pi/2$ in \eqref{acts} to see the automorphism $(a,b)\mapsto(-b,a)$ is homotopic to $\id_{\wt Q}$. Composing with $p$ and then $-\id_{S^{2n-1}}\sim\id_{S^{2n-1}}$ shows that $p\sim b$, as required.

\subsection*{Generators from maximal isotropic subspaces}
It is a standard fact that the spaces $\mathfrak{acs}^{\;\pm}$ of oriented (respectively oppositely oriented) orthogonal complex structures on $\R^{2n}$ are isomorphic to the spaces $\operatorname{OGr}^\pm(n,2n)$ of positive (respectively negative) maximal isotropic subspaces of $\C^{2n}$,
\beq{acsogr}
\mathfrak{acs}^{\;\pm}\ \cong\ \operatorname{OGr}^\pm(n,2n).
\eeq
The correspondence maps $J\in\mathfrak{acs}^{\;\pm}$ to its (isotropic!) $+i$-eigenspace,
\begin{eqnarray}\label{LJ}
J &\!\Mapsto\!& \Lambda_J\,:=\,\big\{a-iJa\ \colon\ a\in\R^{2n}\big\}\,\subset\,\C^{2n}, \\ \nonumber
(i)\_{\Lambda}\oplus(-i)_{\overline\Lambda}\,=:\,J_\Lambda\! &\!\Mapsfrom\!& \Lambda.
\end{eqnarray}
Here, in defining the inverse, we split $\C^{2n}\,\cong\,\Lambda\oplus\overline\Lambda$ by using complex conjugation on $\C^{2n}:=\R^{2n}\otimes\C$ and noting that\footnote{\label{fono}\,$v\in\Lambda\cap\overline\Lambda\so q(v,\overline v)=0=q(v,v)\so q(\Re v,\Re v)=0=q(\Im v,\Im v)\so v=0$.} $\Lambda\cap\overline\Lambda=\{0\}$. We then defined $J_\Lambda$ to act as $i$ on $\Lambda$ and $-i$ on $\overline\Lambda$.

The points $(a,-Ja)\in\wt Q$, i.e.~those with $|a|^2=\frac12$, are the points of $\Lambda_J$ of unit norm. Thus the sphere $S(\Lambda_J)$ is the image of the section \eqref{fib} for some $J\in\mathfrak{acs}^{\;+}$. It inherits a natural real orientation from the one on $\Lambda_J\subset\C^{2n}$ induced by $J$ (or equivalently, multiplication by $i$) and its outward pointing normal.

\begin{prop}\label{prp}
The section $S^{2n-1}\to S(\Lambda_J)\subset\wt Q$ of \eqref{fib} preserves orientations if and only if $J\in\mathfrak{acs}^+$.
\end{prop}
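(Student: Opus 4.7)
The plan is to realize the section $\sigma\colon S^{2n-1}\to S(\Lambda_J)\subset\wt Q$, $a\mapsto(a,-Ja)$, as the restriction to the sphere of the $\R$-linear isomorphism
$$
\tilde\sigma\colon\R^{2n}\To\Lambda_J,\qquad a\Mapsto a-iJa.
$$
(The minus sign is what places the image in the $+i$-eigenspace $\Lambda_J$ rather than in $\overline{\Lambda_J}$, as required by \eqref{LJ}.) Since $\tilde\sigma$ is linear, it maps rays through the origin to rays through the origin, so the outward radial direction at $a\in S^{2n-1}$ pushes forward to a positive multiple of the outward radial direction at $\tilde\sigma(a)\in S(\Lambda_J)$. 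Because both spheres carry the orientation obtained by contracting the ambient orientation with the outward normal, the map on spheres preserves orientation if and only if $\tilde\sigma\colon\R^{2n}\to\Lambda_J$ preserves orientation between the ambient vector spaces.

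Next I would compare orientations by evaluating $\tilde\sigma$ on a basis adapted to $J$. Pick $e_1,\dots,e_n\in\R^{2n}$ such that $\{e_1,Je_1,\dots,e_n,Je_n\}$ is a basis of $\R^{2n}$. By the definition of $\mathfrak{acs}^\pm$ recalled in the footnote, this basis is positively oriented with respect to the standard orientation on $\R^{2n}$ precisely when $J\in\mathfrak{acs}^+$. A direct computation, using $J^2=-\id$, gives
$$
\tilde\sigma(e_k)\=e_k-iJe_k\,=:\,v_k \AND \tilde\sigma(Je_k)\=Je_k+ie_k\=i(e_k-iJe_k)\=iv_k.
$$
Thus the image basis is $\{v_1,iv_1,\dots,v_n,iv_n\}$, which is a (positively oriented) complex basis of $\Lambda_J$, i.e.\ it determines the complex orientation used in \eqref{acsogr}.

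Combining these two observations, $\tilde\sigma$ is orientation-preserving iff $\{e_1,Je_1,\dots,e_n,Je_n\}$ is positively oriented, iff $J\in\mathfrak{acs}^+$; hence so is $\sigma$. The argument is essentially linear algebra; the only delicate point is bookkeeping the three orientation conventions in play (the standard orientation on $\R^{2n}$, the $J$-induced orientation on $\R^{2n}$, and the complex orientation on $\Lambda_J$) and checking that the latter two match tautologically under the $\C$-linear map $\tilde\sigma$ (where $\C$ acts on the source via $J$).
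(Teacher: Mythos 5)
Your proof is correct and follows essentially the same route as the paper: both identify the section with the restriction of the $\C$-linear isomorphism $a\mapsto a-iJa$ (intertwining $J$ on $\R^{2n}$ with $i$ on $\Lambda_J$), conclude that this linear map is orientation-preserving iff $J\in\mathfrak{acs}^+$, and then pass to the spheres via the outward-normal convention. You spell out the basis computation explicitly where the paper simply invokes ``complex structures determine orientations,'' but the underlying argument is identical.
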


\begin{proof}
Let $C(Q)\subset\C^{2n}$ denote the cone on $Q$\,---\,i.e.~the variety of isotropic vectors. The $+i$ eigenspace of $J$ \eqref{LJ} is the image of the map
\beq{map}
\R^{2n}\,\rt\sim\,\Lambda_J\Into C(Q)\ \subset\ \C^{2n}, \qquad a\Mapsto a-iJa,
\eeq
intertwining the action of $J$ on $\R^{2n}$ with the action of $i$ on $\C^{2n}\supset C(Q)$. Since complex structures determine orientations, we see \eqref{map} preserves orientations if and only if $J$ is compatible with the orientation on $\R^{2n}$.

Intersecting with spheres on both sides (oriented using the outward normal) we find that the section $S^{2n-1}\rt\sim S(\Lambda_J)\Into S\(C(Q)\)=\wt Q$ of \eqref{fib} preserves orientations if and only if $J$ is compatible with the orientation on $\R^{2n}$.
\end{proof}

For this reason we let
\beq{lpm}
\lambda_+,\,\lambda_-\ \in\ \pi_{2n-1}\(\wt Q\)
\eeq
denote the homotopy class
of the section $S(\Lambda_J)$ for $J\in\mathfrak{acs}^+$ and \emph{minus} the homotopy class of the section $S(\Lambda_J)$ for $J\in\mathfrak{acs}^-$ respectively. Therefore via the isomorphism $\pi_{2n-1}\(\wt Q\)\cong\Z\oplus\Z(/2\Z)$ of \eqref{ZZ} we have
$$
\lambda_+\=(1,0), \qquad \lambda_-\=(-1,k)
$$
for some $k\in\Z(/2\Z)$ which we will show is 1 in Corollary \ref{liver} in Appendix \ref{app}. This proves \eqref{OH25}.

\begin{rmk}\label{mod2} That $k=1$ means that if $s$ can be perturbed through isotropic sections $s_t$ to have simple zeros close to the origin\,---\,$P$ of them positive and $N$ of them negative\,---\,then $s$ determines not only $P-N=\sqrt e\;(\underline E,s)$ but also $P\pmod 2$ and $N\pmod 2$. (And when $n=2$, so that $k\in\Z$ instead of $\Z/2\Z$, it determines $P$ and $N$ completely by \eqref{RH2}.)
\end{rmk}

\section{Homology groups}
In this Section we handle \eqref{OH3} to \eqref{OH6} by calculating $H_{2n-1}\(\wt Q\)$ and the Hurewicz map $\pi_{2n-1}\(\wt Q\)\to H_{2n-1}\(\wt Q\)$. (See \eqref{hur3} for a stronger result.)

\begin{prop} The Hurewicz map fits into the commutative diagram
\beq{Hur}
\xymatrix@R=16pt{
\pi_{2n-1}\(\wt Q\) \ar@{->>}[r]\ar@{=}[d]_{\eqref{ZZ}}& H_{2n-1}\(\wt Q\) \ar@{=}[d] \\
\Z\oplus\Z(/2\Z) \ar[r]^-{(1,0)}& \Z.\!\!}
\eeq
\end{prop}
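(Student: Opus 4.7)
The plan is to split the argument into two independent pieces: compute $H_{2n-1}\(\wt Q\)$ directly, and then identify the Hurewicz map on each summand of $\pi_{2n-1}\(\wt Q\)=\Z\oplus\Z(/2)$ from \eqref{ZZ} by naturality.

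For the homology computation, I would apply the Serre spectral sequence to the simply-connected fibration $S^{2n-2}\to\wt Q\xrightarrow{p}S^{2n-1}$ of \eqref{fib}. The $E_2$-page $E_2^{p,q}=H_p(S^{2n-1})\otimes H_q(S^{2n-2})$ has nonzero entries only at the four corners $(p,q)\in\{0,2n-1\}\times\{0,2n-2\}$. The only differential that can touch $E^{2n-1,0}$ is the transgression $d_{2n-1}\colon E^{0,2n-2}_{2n-1}\to E^{2n-1,0}_{2n-1}$, which equals cup product with the Euler class of $T_{S^{2n-1}}$. Since $S^{2n-1}$ is odd dimensional its Euler characteristic vanishes, so this differential is zero and the spectral sequence degenerates at $E_2$. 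In particular $H_{2n-1}\(\wt Q\)=E_2^{2n-1,0}\cong\Z$, and $p_*$ is an isomorphism onto $H_{2n-1}(S^{2n-1})$. I orient the target $\Z$ so that $p_*$ sends $+1$ to $[S^{2n-1}]$.

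For the Hurewicz map, recall that the splitting in \eqref{ZZ} is realised by the section $-J$ of \eqref{fib} on the first factor and by the fiber inclusion $i\colon S^{2n-2}\into\wt Q$ on the second. The generator $\lambda_+=(1,0)$ is the class $[(-J)|_{S^{2n-1}}]$ for $J\in\mathfrak{acs}^+$, so naturality of Hurewicz sends it to $(-J)_*[S^{2n-1}]\in H_{2n-1}\(\wt Q\)$. Because $p\circ(-J)=\mathrm{id}$ and $p_*$ is an isomorphism on $H_{2n-1}$, this class is a generator; the sign is $+1$ by our orientation convention above, consistent with Proposition \ref{prp}. For the second summand, any class is $i_*\beta$ with $\beta\in\pi_{2n-1}(S^{2n-2})$, so its Hurewicz image factors through $i_*\colon H_{2n-1}(S^{2n-2})\to H_{2n-1}\(\wt Q\)$; but $H_{2n-1}(S^{2n-2})=0$ since $2n-1>2n-2$, so this image vanishes. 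Therefore the Hurewicz map is the projection $(1,0)\colon\Z\oplus\Z(/2)\to\Z$, and it is surjective.

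The only nontrivial input is the vanishing of the transgression, which rests on the odd dimension of $S^{2n-1}$; the rest is formal from naturality of Hurewicz and the splitting in \eqref{ZZ}. The one bookkeeping point to verify carefully is that the choice of orientation on $H_{2n-1}\(\wt Q\)=\Z$ used to state the diagram agrees with the one determined by $\lambda_+$, but this is guaranteed by Proposition \ref{prp} together with the convention fixed above.
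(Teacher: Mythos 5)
Your proof is correct, and it takes a genuinely different route from the paper. Where the paper applies the Leray spectral sequence to the circle fibration $S^1\to\wt Q\to Q$ (\eqref{wtQ}), you apply the Serre spectral sequence to the sphere fibration $S^{2n-2}\to\wt Q\to S^{2n-1}$ of \eqref{fib}. Your approach is more economical for the statement at hand: the vanishing of $e(T_{S^{2n-1}})$ (or, equivalently, the existence of the section $-J$, which forces $p_*$ to be a split surjection and hence kills the only possible differential) immediately gives $H_{2n-1}\(\wt Q\)\cong\Z$ with $p_*$ an isomorphism. The naturality-of-Hurewicz argument for the two summands, and the orientation check via Proposition \ref{prp}, then complete the proof cleanly. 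By contrast, the paper's circle-bundle calculation is more involved but is not wasted: it produces the explicit presentation \eqref{Hurt} of $H_{2n-1}\(\wt Q\)$ as a quotient of $\Z[S(\Lambda_+)]\oplus\Z[S(\Lambda_-)]$, which is exactly what is needed in the subsequent proof of \eqref{OH5}. Your approach would require a separate argument identifying the images of $[S(\Lambda_\pm)]$ in $H_{2n-1}\(\wt Q\)\cong\Z$ to recover that presentation. One cosmetic point: you wrote the transgression as $d_{2n-1}\colon E^{0,2n-2}_{2n-1}\to E^{2n-1,0}_{2n-1}$, but in the homology Serre spectral sequence the differentials decrease the base degree, so the arrow should point the other way, $E^{2n-1,0}_{2n-1}\to E^{0,2n-2}_{2n-1}$; this does not affect your conclusion that the relevant group survives to $E^\infty$.
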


\begin{proof}
It is a classical fact that
$$
H_{2n-2}\(Q^{2n-2},\Z\)\=\Z\cdot\!\big[\PP(\Lambda_+)\big]\,\oplus\,\Z\cdot\!\big[\PP(\Lambda_-)\big]
$$
with intersection product
\beq{prod}
\big[\PP(\Lambda_+)\big]\cdot\big[\PP(\Lambda_-)\big]\,=\,\left\{\!\!\begin{array}{ll} 1&n\text{ even,} \\ 0&n\text{ odd,} \end{array}\right. \quad \big[\PP(\Lambda_+)\big]^2\,=\,\big[\PP(\Lambda_-)\big]^2\,=\,\left\{\!\!\begin{array}{ll} 0&n\text{ even,} \\ 1&n\text{ odd.} \end{array}\right.\!\!
\eeq
We combine this with the Leray spectral sequence of the fibration $S^1\to\wt Q\to Q$ \eqref{wtQ} to compute $H_{2n-1}\(\wt Q\)$. There is only room for one differential, giving
\beq{LES}
H_0(S^1)\otimes H_{2n}(Q)\rt\partial H_1(S^1)\otimes H_{2n-2}(Q)\To H_{2n-1}\(\wt Q\)\To0.
\eeq
Identifying $H_i(S^1)\cong\Z,\ i=0,1$, the differential $\partial\colon H_{2n}(Q)\to H_{2n-2}(Q)$ is cap product with the Euler class of the circle bundle $\wt Q\to Q$, i.e.~with $c_1(\cO_Q(-1))=-H$, where $H\in H^2(Q)$ is the hyperplane class. Now $H_{2n}(Q)=\Z[H^{n-2}]$, where $[\ \cdot\ ]$ denotes Poincar\'e dual, so $\im\partial$ is generated by
$$
[H^{n-1}]\=\big[\PP(\Lambda_+)\big]+\big[\PP(\Lambda_-)\big]\ \in\ 
\Z\big[\PP(\Lambda_+)\big]\oplus\Z\big[\PP(\Lambda_-)\big]\=H_{2n-2}(Q).
$$
By construction these classes $\big[\PP(\Lambda_\pm)\big]$ represent, in $H_{2n-1}\(\wt Q\)$, the fundamental classes of the circle bundles $S(\cO(-1))$ over $\PP(\Lambda_\pm)$. Since these are the spheres $S(\Lambda_\pm)\subset\wt Q$ from Proposition \ref{prp}, they are in the image of the Hurewicz map $\pi_{2n-1}\(\wt Q\)\to H_{2n-1}\(\wt Q\)$ and
\eqref{LES} now gives
\beq{Hurt}
H_{2n-1}\(\wt Q\)\ \cong\ \frac{\Z\cdot\!\big[S(\Lambda_+)\big]\oplus\Z\cdot\!\big[S(\Lambda_-)\big]}{\Langle\big[S(\Lambda_+)\big]+\big[S(\Lambda_-)\big]\Rangle}\ \xrightarrow[\sim]{(a,b)\,\mapsto\,a-b}\ \Z\cdot\big[S(\Lambda_+)\big].
\eeq
This map gives the right hand vertical arrow of \eqref{Hur} and proves the Proposition.
\end{proof}

\subsection*{Proof of \eqref{OH5}}
When $s$ is homogeneous $s|_{S^{2n-1}}$ factors
\beq{facs}
\xymatrix@R=15pt{
S^{2n-1}\ar[r]^-s\ar[d]& \wt Q \ar[d] \\
\PP^{n-1} \ar[r]_-{\underline s}& Q.\!}
\eeq
Now $\wt Q=S\(\cO_Q(-1)\)$ is the unit circle bundle of $\cO_Q(-1)$, whose pull back $\underline s^*\,\cO_Q(-1)$ to $\PP^{n-1}$ is some $\cO_{\PP^{n-1}}(-d)$. Integrating $c_1^{n-1}$ gives
\beq{homd}
d^{n-1}\,=\,\int_{\PP^{n-1}}\underline s^*\,H^{n-1}\,=\,\int_{\PP^{n-1}}\underline s^*\Big(\big[\PP(\Lambda_+)\big]+\big[\PP(\Lambda_-)\big]\Big)\,=:\,d_++d_-\,.\hspace{-3mm}
\eeq
Therefore $s$ factors as
$$
S\(\cO_{\PP^{n-1}}(-1)\)\,\To\,S\(\cO_{\PP^{n-1}}(-d)\)\=\underline s^*S\(\cO_Q(-1)\)\,\To\,S\(\cO_Q(-1)\).
$$
The first arrow is a bundle map (covering the identity map on $\PP^{n-1}$) and therefore has degree $d$, while the second is induced by $\underline s$. Now in $H_{2n-2}(Q)$, by \eqref{prod},
$$
\underline s\;_*\big[\PP^{n-1}\big]\=\left\{\!\!\begin{array}{ll}d_+\big[\PP(\Lambda_+)\big]+d_-\big[\PP(\Lambda_-)\big] & n \text{ odd}, \\
d_-\big[\PP(\Lambda_+)\big]+d_+\big[\PP(\Lambda_-)\big] & n \text{ even},\end{array}\right.
$$
which we write as $d_\pm[\PP(\Lambda_+)]+d_\mp[\PP(\Lambda_-)]$ where $\pm:=(-1)^{n-1}$. Thus, in $H_{2n-1}\(\wt Q\)$,
$$
s_*\big[S^{2n-1}\big]\=d\,\Big(d_\pm\big[S(\Lambda_+)\big]+d_\mp\big[S(\Lambda_-)\big]\Big).
$$
So by \eqref{Hur} the homotopy classes of $s$ and $d\(d_\pm\big[S(\Lambda_+)\big]+d_\mp\big[S(\Lambda_-)\big]\)$ in $\pi_{2n-1}\(\wt Q\)$ both have the same image in the first $\Z$ factor of \eqref{ZZ}. And by \eqref{Hurt} it is
$$
\sqrt e\;(\underline E,s)\=d\;(d_\pm-d_\mp)\ \stackrel{\eqref{homd}}=\ (-1)^{n-1}(d_++d_-)^{\frac1{n-1}}(d_+-d_-).
$$

\subsection{Another proof of \eqref{OH5}}\label{nother}
We will compactify $(\C^n,\underline E,s)$ to $(\PP^n,\overline E,\overline s)$ in such a way that the quadratic form extends to $\overline E$ and the isotropic section $\overline s$ is nowhere zero on the boundary $\PP^n\take\C^n$. Thus its only zero is the origin with multiplicity $\sqrt e\;(E,s)\in\Z$, which we can then compute as
\beq{ans}
\sqrt e\;(\underline E,s)\=\int_{\PP^n}\sqrt e\;\(\;\overline E\;\).
\eeq

Consider $\C^n\subset\PP(\C^n\oplus\C)=\PP^n$ and let $z\in\Gamma\(\cO_{\PP^n}(1)\)$ be the section vanishing on the infinity divisor $\PP(\C^n\oplus\{0\})=\PP^{n-1}\subset\PP^n$. Let $d$ be the homogeneous degree of $s\in H^0(\underline E)$. 
Give $E\oplus\C\oplus\C$ its obvious quadratic form $q\oplus(XY)$ and let $\underline E\oplus\cO\oplus\cO$ denote the corresponding trivial orthogonal bundle over $\PP^n$. Considering $(s,1,0)$ to be a rational isotropic section (regular over $\C^n\subset\PP^n$) it becomes regular when multiplied by $z^d$. Thus we get an isotropic subbundle 
\beq{line}
\xymatrix{
\cO(-d)\ \ \ar@{^(->}[rr]^-{z^d(s,\,1,\,0)}&& \ \underline E\oplus\cO\oplus\cO\ \text{ over }\ \PP^n}
\eeq
which is orthogonal to the section $(0,-1,0)$. Therefore $\overline E:=\cO(-d)^\perp/\cO(-d)$ is an $SO(2n,\C)$ bundle on $\PP^n$ and $(0,-1,0)$ projects to a section $\overline s$.

Its zeros occur where $(s,1,0)$ is a multiple of $(0,-1,0)$, i.e.~only at the origin $[\underline0:1]\in\PP(\C^n\oplus\C)$. And over $\C^n=\{z\ne0\}$ we have the isometry
$$
\underline E\ \rt{\ e\Mapsto(e,\,0,\,-q(e,s))\ }\ \cO(-d)^\perp/\cO(-d)\=\overline E
$$
mapping $s$ to $\(s,0,0\)\equiv\(s,0,0\)-(s,1,0)=(0,-1,0)=\overline s$. This proves \eqref{ans}.\medskip

To compute the integral \eqref{ans} we pull back a universal calculation of Edidin-Graham via the classifying map
$$
f\colon\PP^n\,\To\,\PP(E\oplus\C_1\oplus\C_2)
$$
for the family \eqref{line} of lines in $E\oplus\C^2$. Since $z^d(s,1,0)$ is isotropic this has image in the quadric $Q^{2n}$ of isotropic lines, over which we have the isotropic subbundle $\cO(-1)\into\underline E\oplus\underline\C^2$. Since $f^*\cO(-1)\cong\cO(-d)$,
\beq{univ}
f^*\(\cO(-1)^\perp/\cO(-1)\)\=\cO(-d)^\perp/\cO(-d)\=\overline E.
\eeq
By \cite[Theorem 4]{EG} we have, in $H^{2n}(Q^{2n})$,
$$
\sqrt e\left(\frac{\cO(-1)^\perp}{\cO(-1)}\right)\=(-1)^{n-1}\Big(\big[\PP(\Lambda_+\oplus\C_2)\big]-\big[\PP(\Lambda_-\oplus\C_2)\big]\Big),
$$
where $\Lambda_\pm$ (respectively $\Lambda_\pm\oplus\C_2$) are maximal isotropic subspaces of $E$ (respectively $E\oplus\C^2$) of the appropriate sign. Now intersect both sides with the hyperplane section $\PP(E\oplus\C_1)$. Since this intersects the $\PP(\Lambda_\pm\oplus\C_2)$ transversally in $\PP(\Lambda_\pm)$,
\beq{univ2}
\sqrt e\!\left.\left(\frac{\cO(-1)^\perp}{\cO(-1)}\right)\right|_{Q^{2n-1}}=\ (-1)^{n-1}\;\iota_*\Big(\big[\PP(\Lambda_+)\big]-\big[\PP(\Lambda_-)\big]\Big)\,\in\,H^{2n}\(Q^{2n-1},\Z\),\hspace{-2mm}
\eeq
where $\iota\colon Q^{2n-2}\into Q^{2n-1}$ is the inclusion\footnote{Note that $Q^{2n-1}$ is singular at the point $[\underline 0:1:0]$ (only), disjoint from $Q^{2n-2}$ and $\PP(\Lambda_\pm)$ whose Poincar\'e duals are therefore defined. The singularity is crucial; it ensures the classes $\iota_*[\PP(\Lambda_+)]$, $\iota_*[\PP(\Lambda_-)]$ of \eqref{univ2} are different ($H^{2n}$ of a smooth $Q^{2n-1}$ only has rank 1).} $\PP(E)\cap Q^{2n}\into\PP(E\oplus\C_1)\cap Q^{2n}$. Now $f$ factors through $Q^{2n-1}$ and pulls back the divisor $Q^{2n-2}$ to $(z^d=0)=d\;\PP^{n-1}\subset\PP^n$. Therefore
pulling \eqref{univ2} back by $f$ using \eqref{univ} gives
$$
\int_{\PP^n}\sqrt e\(\;\overline E\;\)\=(-1)^{n-1}\int_{d[\PP^{n-1}]}\(f|\_{\PP^{n-1}}\)^*\Big(\big[\PP(\Lambda_+)\big]-\big[\PP(\Lambda_-)\big]\Big).
$$
Since on $(z=0)$ we have $f|\_{\PP^{n-1}}=\underline s\colon\PP^{n-1}\to Q^{2n-2}$, this gives
$$
\sqrt e\;(\underline E,s)\,=\,(-1)^{n-1}d\!\int_{\PP^{n-1}}\underline s^*\Big(
\big[\PP(\Lambda_+)\big]-\big[\PP(\Lambda_-)\big]\Big)\,=\,(-1)^{n-1}d(d_+-d_-).
$$
But $d=(d_++d_-)^{1/(n-1)}$ \eqref{homd}, so this reproves \eqref{OH5}. \medskip

To prove \eqref{OH3}, \eqref{OH4} and \eqref{OH6} we degenerate $\Gamma_{\!s}\subset\underline E$ to $\lim_{t\to\infty}\Gamma_{\!ts}$, the isotropic cone $C:=C_{Z(s)/\C^n}\subset\underline E$ supported over $Z(s)=\{0\}\subset\C^n$. This turns $\sqrt e\;(\underline E,s)$ into the class $\surd\;0^{\,!}_E\;[C]$ of \cite[Definition 3.3]{OT1}.

If the cycle $C\subset\underline E|_0=E$ factors (set-theoretically) through a maximal isotropic subspace $\Lambda\subset E$ then \cite[Lemma 3.5]{OT1} gives
$$
\sqrt0^{\,!}_E\;[C]\=(-1)^{|\Lambda|}0^{\,!}_\Lambda\;[C].
$$
This, plus \eqref{H3} applied to $\Lambda$, gives \eqref{OH3}. Combining \eqref{OH3} with the deformation invariance of $\sqrt e\;(\underline E,s)$ also gives \eqref{OH4}.\medskip

So finally we turn to \eqref{OH6}. We use \cite[Lemma 3.4]{OT1} to write $\sqrt e\;(\underline E,s)$ as
$$
\sqrt0^{\,!}_E\;[C]\=\int_{\overline C}\sqrt e\(\cO(-1)^\perp/\cO(-1)\),
$$
where $\overline C\subset\overline E=\PP(E\oplus\C)$ is the projective completion of $C\subset E$. By \eqref{univ2} this is
$$
(-1)^{n-1}\int_{\iota^*[\overline C]}\big[\PP(\Lambda_+)\big]-\big[\PP(\Lambda_-)\big]
$$
where $\iota\colon Q^{2n-2}\into Q^{2n-1}$ is the inclusion $\PP(E)\cap Q^{2n}\into\PP(E\oplus\C)\cap Q^{2n}$ and $\big[\PP(\Lambda_\pm)\big]$ denotes the Poincar\'e dual of these cycles inside $Q^{2n-2}$. Since $\iota^*[\overline C]=\big[\PP(C)\big]$ in $\PP(E)$, this proves \eqref{OH6}.

\section{Cosection localisation}\label{cosec}
To evaluate \eqref{OH3'} we use the definition of $\sqrt e\;(E,s)$ in \cite[Definition 3.3]{OT1}. This uses the orientation $o$ to fix the sign ambiguity in the Edidin-Graham class \cite{EG}, then cosection localisation \cite{KL} to localise to the zeros of the isotropic section $s$. In turn \cite[Definition 3.3]{OT1} uses \cite[Section 3.2]{OT1}, which  uses an auxiliary choice of maximal isotropic subspace $\Lambda\subset E$ (the final result is independent of $\Lambda$, by \cite[Equation 34]{OT1}). In our example \eqref{eg} we may take $\Lambda\subset E$ to be the first and fourth summands $\underline\C^2\subset\underline\C^4$ with quotient $\Lambda^*$ being the second and third.

That is, we work over $Y=\C^2_{x,y}$ with $\underline \C^4=\Lambda\oplus\Lambda^*$ where $\Lambda=\underline\C^2$. In this splitting the quadratic form is given by the obvious pairing and
$$
s\=(s_1,s_2)\ \text{ where }\ s_1\=(x^2,-xy)\ \text{ and }\ s_2\=(y^2,xy).
$$
Note $\langle s_1,s_2\rangle=0$, so $s$ is isotropic. As in \cite[Definition 3.3]{OT1} we replace the graph $\Gamma_{\!s}$ by the cone
\beq{cone1}
C\,:=\,C_{Z(s)/Y}\,=\,\lim_{t\to\infty}\Gamma_{\!ts}\,\subset\,\Lambda\oplus\Lambda^*\,=\,\underline\C^4,
\eeq
where $\lim$ denotes the flat limit in the Hilbert scheme of subschemes of the total space of $\underline\C^4$.
Letting $X,Y,Z,W$ be the obvious linear coordinates on the fibres of $\underline\C^4$\,---\,so $X,W$ on $\Lambda$ and $Y,Z$ on $\Lambda^*$\,---\,we find the ideal of $C$ is
\beq{Cdeal}
\(Z+W,\ XY-Z^2,\ yX-xZ,\ xY-yZ,\ x^2,\,xy,\,y^2\).
\eeq
Following \cite[Section 3.2]{OT1} we now replace this by
$$
C_{C\;\cap\;\Lambda/C}\=\lim_{t\to\infty}t\cdot C,
$$
where $t$ acts on $\Lambda\oplus\Lambda^*$ by $(\id,\,t\cdot\id)$. This amounts to replacing $Y,Z$ by $Y/t,Z/t$ and taking $t\to\infty$, and means the ideal of $C_{C\;\cap\;\Lambda/C}$ is
\beq{idea}
\(W,\ XY,\ yX,\ xY-yZ,\ x^2,\,xy,\,y^2\)\ \text{ with radical }\ (W,XY,x,y).
\eeq
We claim that as a cycle the former is just the latter with multiplicity 2.
When $X\ne0$ the ideal \eqref{idea} becomes $(W,Y,y,x^2)$ with multiplicity $2=\operatorname{length}\,\C[x]/(x^2)$. Over $Y\ne0$ the ideal becomes $(W,X,x-yZ/Y,y^2)$ with multiplicity $2=\operatorname{length}\,\C[y]/(y^2)$. We can ignore the 1-dimensional locus $X=0=Y$ (where $W=0=x^2=xy=y^2$) when considering the 2-dimensional cycle class of $C_{C\;\cap\;\Lambda/C}$, which is therefore $2\times2$ copies of $\C^2$,
\beq{ccone}
\big[C_{C\;\cap\;\Lambda/C}\big]\=2\(\C^2_{X,Z}\ +\ \C^2_{Y,Z}\).
\eeq
The notation gives the coordinates on the $\C^2$\;s with the other variables set to zero\,---\,so the first summand is $x=0=y=Y=W$ with the second $x=0=y=X=W$. In particular both lie in the fibre $\Lambda\_0\oplus\Lambda_0^*$  of $\Lambda\oplus\Lambda^*$ over the origin $x=0=y$. \smallskip

The recipe of \cite[Section 3.2]{OT1} is to now consider this space $ \Lambda_0\oplus\Lambda^*_0=\C^4$ as a $\Lambda_0^*$-bundle over $\Lambda_0=\C^2_{X,W}$, then to consider the tautological section $(X,W)$ of $\Lambda$ as a cosection $\underline\Lambda_0^*\to\cO_{\C^2_{X,W}}$. Then we use the Kiem-Li construction \cite[Equation 2.1]{KL} to cosection localise the intersection of $\big[C_{C\;\cap\;\Lambda/C}\big]$ \eqref{ccone} and $0_{\underline\Lambda_0^*}=(Y=0=Z)$ to the zero locus $0\in\C^2_{X,Z}$ of the cosection.

The definition \cite[Equation 2.1]{KL} treats the two summands of \eqref{ccone} differently. The second is already supported over $0\in\C^2_{X,W}$ so is just intersected with $0_{\underline\Lambda_0^*}=(Y=0=Z)$ to give $2\times 1=2$.

The first component is supported over the $X$-axis $\C_X=(W=0)$, where it is equal to the line subbundle $\underline\C_Z\subset\underline\C^2_{Y,Z}=\underline\Lambda_0^*|_{\C_X}$. The cosection restricts to $(X,0)$ and vanishes on a \emph{divisor} $(X=0)\subset\C_X$. This means we can omit the blowing up used in \cite[Equation 2.1]{KL} and simply take (2 times) \emph{minus} the vanishing locus of $(X,0)$, i.e.~$-2$ times the origin $0\in\C^2_{X,W}$.

In total we get the origin with multiplicity $2-2=0$, recovering \mbox{\eqref{OH3'}} and conclusively demonstrating that the many ways of calculating $\sqrt e\;(E,s)$ in this paper are much simpler to work with than the original definition via cosection localisation.


\section{$K$-theoretic} Here we explain more about \eqref{OH7}. The Clifford algebra Cliff$\;(E,q)$ is a $(\Z/2\Z)$-graded algebra with a unique irreducible graded module $M=M^+\oplus M^-$.
If $s$ is an isotropic section of $\underline E$ then its action on $M$ induces a 2-periodic complex \cite{PV}
\beq{Mdot}
\underline M\udot\=\dots\rt s\underline M^+\rt s\underline M^-\rt s\dots
\eeq
since $q(s,s)=0$. If we pick a splitting $E=\Lambda\oplus\Lambda^*$ into maximal isotropic subspaces, with $\Lambda\subset E$ positive and quadratic form given by the pairing between $\Lambda$ and $\Lambda^*$, then
\beq{M}
M\=M^+\oplus M^-\,\text{ is isomorphic to }\, \bigwedge\nolimits^{\!\bullet}\Lambda^*\=\bigwedge\nolimits^{\!\mathrm{even}}\Lambda^*\oplus\bigwedge\nolimits^{\!\mathrm{odd}}\Lambda^*
\eeq
with Clifford action of $(e,f)\in\Lambda\oplus\Lambda^*$ being $f\wedge(\,\cdot\,)+e\,\ip\,(\,\cdot\,)$. So writing $s=(s_1,s_2)\in\Gamma(\Lambda\oplus\Lambda^*)$ with $\langle s_1,s_2\rangle=0$,
\beq{mdot}
\underline M\udot\=\dots\rt{s_2\wedge\ +s_1\ip}\bigwedge\nolimits^{\!\mathrm{even}}\underline\Lambda^*\rt{s_2\wedge\ +s_1\ip}\bigwedge\nolimits^{\!\mathrm{odd}}\underline\Lambda^*\rt{s_2\wedge\ +s_1\ip}\dots
\eeq
Up to tensoring by a line bundle $(\det\underline\Lambda\otimes K_{\C^n})^{1/2}$\,---\,which for us is trivial\,---\,the $K$-theory class of $\bigwedge^{\!\mathrm{even}}\underline\Lambda^*-\bigwedge^{\!\mathrm{odd}}\underline\Lambda^*$
is precisely the $K$-theoretic square root Euler class $\sqrt\mathfrak{e}\;(\underline E)$ defined in \cite[Section 5.1]{OT1}. And its localisation $\sqrt\mathfrak{e}\;(\underline E,s)\in K(Z(s))$ to $Z(s)$ defined in \cite[Section 5.2]{OT1} is shown in \cite{OS} to coincide with
$$
\cH^+(\underline M\udot)\,-\,\cH^-(\underline M\udot)\ \in\ K\(Z(s)\)\=K\(\mathrm{Coh}\;\{0\}\)\=\Z.
$$
Taking localised Chern character we recover the localised cohomological class by \cite[Theorem 6.1]{OT1}, so
$$
\sqrt e\;(\underline E,s)\=\cH^+(\underline M\udot)\,-\,\cH^-(\underline M\udot)\,\in\,K\(\mathrm{Coh}\;\{0\}\)\,=\,\Z,
$$
which is \eqref{OH7}.\medskip

We now calculate $\cH^\pm(\underline M\udot)$ in our running example \eqref{run}. We let $\Lambda$ be the first and fourth summands of $E$ so that $s_1=(x^d,\,-x^{d-i}y^{d-j})$ and $s_2=(y^d,\,x^iy^j)$. Thus
the complex $\underline M\udot$ \eqref{mdot} becomes
$$
\xymatrix@C=30pt{
\dots \ar[rrr]^-{\,\,\left(\begin{smallmatrix}x^d&-x^{d-i}y^{d-j} \\ -x^iy^j&y^d\end{smallmatrix}\right)\,\,}&&& \underline\C^2 \ar[rrr]^-{\,\,\left(\begin{smallmatrix}y^d&x^{d-i}y^{d-j} \\ x^iy^j&x^d\end{smallmatrix}\right)\,\,}&&& \underline\C^2 \ar[r]&\dots}
$$
The kernel of the second matrix is the kernel of the row vector $(y^j\ x^{d-i})$ (since both its rows are nonzero multiples of this one).
But this is the image of the column vector $(x^{d-i}\ -\!y^j)^t$. Since the first (respectively second) column of the first matrix is $x^i$ (respectively $-y^{d-j}$) times by this column vector, we see that
$$
\cH^+(\underline M\udot)\ \cong\ \C[x,y]/(x^i,y^{d-j}),\quad\text{ of length }i(d-j).
$$
Similarly
$$
\cH^-(\underline M\udot)\ \cong\ \C[x,y]/(y^j,x^{d-i}),\quad\text{ of length }j(d-i).
$$
Subtracting the two not only recovers $\sqrt e\;(E,s)=d(i-j)$, but also its refinement \eqref{RH7} as the difference of $i(d-j)$ and $j(d-i)$. 

\section{Torus localisation}
Suppose $T=\C^*$ acts on $\C^n$ with fixed locus $(\C^n)^T=\{0\}$. Suppose it also acts on $(E,q,o)$, fixing the quadratic form and the orientation, and suppose that the induced action on $\underline E$ preserves the section $s$.

Then as in \cite[Section 7]{OT1} both $\sqrt e\;(E)$ and $\sqrt e\;(E,s)$ lift naturally to equivariant classes with localisation formulae expressing them as classes pushed forward from the fixed loci $(\C^n)^T=\{0\}$ and $Z(s)^T$ respectively, with the former class being the pushforward of the latter. Since in our case $(\C^n)^T=\{0\}=Z(s)^T$ this means they are the same class $\sqrt e^T\!\(\underline E|_0\)\big/e^T\(N_{0/\C^n}\)$. This gives \eqref{OH8}.

This gives an easy computation of our running example \eqref{run}\,---\,the section
$$
s\,=\,(x^d,\,y^d,\,x^iy^j,\,-x^{d-i}y^{d-j})\ \text{ of }\ \underline E\,=\,\underline\t^{\;d}\oplus\underline\t^{-d}\oplus\underline\t^{\;i-j}\oplus\underline\t^{\;j-i} \text{ over }\C^2\,=\,\t^{-1}\times\t.
$$
Choosing the positive maximal isotropic subspace $\Lambda:=\t^d\oplus\t^{\;j-i}\subset E$ gives
$$
\frac{\sqrt e^T\!\(\underline E|_0\)}{e^T\(N_{0/\C^2}\)}\=
\frac{e^T(\Lambda)}{e^T(T_0\C^2)}\=\frac{e(\t^{\;d}\oplus\t^{\;j-i})}{e(\t^{-1}\oplus\t)}\=\frac{(dt)(j-i)\;t}{(-t)\;t}\=d(i-j),
$$
agreeing with \eqref{result2} and \eqref{OH3'} when $d=2,\,i=1=j$. Notice the simplicity of this calculation, which did not even use the section $s$ (just the fact that $Z(s)$ is supported at $0\in\C^2$).\medskip

\section{$n=2$ refinements}\label{refine}
When $n=2$ special things happen\footnote{Also $SO(4,\C)$ is a double cover of $SO(3,\C)\times SO(3,\C)$, corresponding to the splitting of $\bigwedge^2E=\bigwedge^+\oplus\bigwedge^-$ into eigenspaces of the complex Hodge star made from $q$ and $o$. Furthermore, if $E=\Lambda\oplus\Lambda^*$ then $(\sigma,\tau)\in\Gamma(\underline\Lambda\oplus\underline\Lambda^*)$ is isotropic $\langle\sigma,\tau\rangle=0$ if and only if $\sigma\wedge\tau\in\bigwedge^+$. But we have not seen a way to exploit this because the zero locus of $\sigma\wedge\tau$ is bigger than that of $(\sigma,\tau)$ in general.} because $E_\R\cong\R^4\cong\HH$ and $Q\cong\PP^1\times\PP^1$. This will allow us to refine the integer $\sqrt e\;(\underline E,s)$ to a pair of integers $(d_1,d_2)$ with $\sqrt e\;(\underline E,s)=d_1-d_2$. We will give a number of equivalent expressions \eqref{RH1} to \eqref{RH7} for this refinement.

\subsection{Special isomorphisms} Fix an isomorphism $\R^4\cong\HH$ with the quaternions and use the induced metric and orientation. Recall the spaces $\mathfrak{acs}^{\;\pm}$ \eqref{acsogr} of complex structures on $\R^4$ compatible with its metric and orientation (respectively the opposite orientation). For $n=2$ we will denote them by $S^2_\pm$ since they are isomorphic to the space of unit imaginary quaternions,
$$
\mathfrak{acs}^{\;\pm}\ =:\ S^2_\pm\=S(\im\HH),
$$
with the complex structure being given by left\footnote{In our running example \eqref{run} we identify $\R^4$ with $\HH$ by $(X,Y,Z,W)\mapsto X+iY+jZ+kW$. Then \emph{left} multiplication by imaginary quaternions is compatible with our orientation $o=\partial_X\wedge\partial_Y\wedge\partial_Z\wedge\partial_W$; for instance $i\colon\partial_X\mapsto\partial_Y$ and $i\colon\partial_Z\mapsto\partial_W$ because $i\cdot j=k$.} (respectively right) multiplication by the quaternion. This trivialises the bundle \eqref{fib}, giving an isomorphism
\beq{Qt2}
S^3\times S^2_+\ \rt\sim\ \wt Q, \quad (a,J)\Mapsto(a,-Ja),\ \text{ where }\ J\in S(\im\HH).
\eeq
As explained in \eqref{LJ} the $S^2_\pm$ are isomorphic to the spaces $\operatorname{OGr}^{\;\pm}(2,4)$ of positive (respectively negative) maximal isotropic subspaces of the complexification $\R^4\otimes\C$,
$$
S(\im\HH)\ \cong\ S^2_\pm\ \cong\ \operatorname{OGr}^{\;\pm}(2,4)
$$
by mapping $J\in S^2_\pm$ to its $+i$-eigenspace in OGr$\;^\pm(2,4)$,
\begin{eqnarray*}
J &\Mapsto& \Lambda_J\,:=\,\big\{a-iJa\ \colon\ a\in\R^4\big\}\,\subset\,\C^4, \\
(i)\_{\Lambda}\oplus(-i)_{\overline\Lambda}\,=:\,J_\Lambda\! &\Mapsfrom& \Lambda
\end{eqnarray*}
just as in \eqref{LJ}. To see these $S^2$s as $\PP^1$s with a holomorphic structure we intersect with a \emph{fixed} positive (respectively negative) maximal isotropic subspace $\Lambda_\pm\subset\C^4$,
\beq{switch}
S^2_\pm\ \cong\ \operatorname{OGr}^{\;\pm}(2,4)\ \cong\ \PP(\Lambda_\mp), \qquad \Lambda\,\Mapsto\,\Lambda\cap\Lambda_\mp\,.
\eeq
Notice the switch of signs! The reasons these maps are isomorphisms is that any isotropic line $\<v\>\in\C^4$ is contained in a \emph{unique positive maximal isotropic subspace} $\Lambda_v^+\subset\C^4$: it is the set of vectors in $\<v\>^\perp$ which project to the unique positive isotropic line in the $SO(2,\C)$ space $\<v\>^\perp/\<v\>$. Similarly $\<v\>$ is contained in a unique maximal negative isotropic subspace $\Lambda_v^-\subset\C^4$.

Therefore $\<v\>=\Lambda_v^+\cap\Lambda_v^-$ and, conversely, any pair of a positive and a negative maximal isotropic subspace intersect in a line $\langle v\rangle$. Thus $Q$ is just the product of the spaces \eqref{switch},
\beq{arr}
\begin{array}{ccccccc}
Q \!\!&\cong&\!\! \operatorname{OGr}^{\;+}(2,4)\times\operatorname{OGr}^{\;-}(2,4) \!\!&\cong&\!\! \PP(\Lambda_-)\times\PP(\Lambda_+) \!\!&\cong&\!\! 
S^2_+\times S^2_- \hspace{-3mm}\vspace{1mm}\\
\[v\] &\hspace{-5mm}\Mapsto\hspace{-5mm}& \(\Lambda_v^+,\Lambda_v^-\) &\!\!\!\Mapsto\!\!\!&\!\! \(\Lambda_v^+\cap\Lambda_-\;,\, \Lambda_v^-\cap\Lambda_+\) \!\!&\!\!\!\Mapsto\!\!\!& (J_+,J_-)\hspace{-3mm}
\end{array}
\eeq
The last term is the unique $(J_+,J_-)\in S^2_+\times S^2_-$ with $+i$-eigenspace containing $\<v\>\subset\C^4$ (equivalently, $J_\pm\Re v=-\Im v$ in $\R^4$ or $J_\pm=(i)\_{\Lambda_v^\pm}\oplus(-i)_{\overline\Lambda_v^\pm}$ as in \eqref{LJ}).\smallskip

\subsection{Even more special}\label{ems}
Two final descriptions of the maps $Q\to\PP(\Lambda_\pm)$ will be useful later in Section \ref{holl}. We describe them invariantly but the description in local coordinates is much simpler\,---\,for which most readers can safely skip to \eqref{Qq} in the next section.

Since $n=2=\dim\Lambda^+$ a further special thing happens: $\Lambda_+$ is naturally skew-isomorphic to its dual, up to scale. That is, picking a trivialisation $\omega\in\bigwedge^2\Lambda_+^*$ \,---\,which is unique up to scale\,---\,we have $\ip\,\omega\,\colon\Lambda_+\rt\sim\Lambda_+^*$.

Then, given $(\sigma,\tau)\in\Lambda_+\oplus\Lambda_+^*$, the identity $\langle \sigma,\tau\rangle\,\omega\equiv\tau\wedge(\sigma\ip\,\omega)$ shows the isotropic condition $\langle \sigma,\tau\rangle=0$ is equivalent to $\tau$ and $\sigma\ip\,\omega$ being proportional. So writing\footnote{At least for $\sigma\ne0$. Otherwise we have the equivalent expression $(\sigma,\,\tau)=\(\lambda^{-1}\tau\ip\,\omega^{-1},\,\tau\)$.}
\beq{st}
(\sigma,\,\tau)\=\(\sigma,\,\lambda\;\sigma\ip\,\omega\), \quad\lambda\in\PP^1,
\eeq
we will find that
\begin{itemize}
\item fixing $\sigma$ and varying $\lambda\in\PP^1$ gives the elements of a $\PP(\Lambda_-)$, while
\item fixing $\lambda\in\PP^1$ and varying $\sigma\in\Lambda_+$ gives the elements of a $\PP(\Lambda_+)$.
\end{itemize}
Therefore using \eqref{arr} we will show the map to $\PP(\Lambda_+)$ takes $(\sigma,\tau)$ to $[\sigma]$ while the map to $\PP(\Lambda_-)$ takes it to the ratio $\lambda=\tau/(\sigma\ip\,\omega)$.

We use the splitting $\C^4=\Lambda_+\oplus\overline\Lambda_+\cong\Lambda_+\oplus\Lambda_+^*$ of \eqref{LJ} and Footnote \ref{fono}.

\begin{lem}\label{lmm}
The map $Q\to\PP(\Lambda_+)$ takes $(\sigma,\tau)\mapsto[\sigma]\in\PP(\Lambda_+)$ if $\sigma\ne0\ne\tau$.
\end{lem}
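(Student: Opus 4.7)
The plan is to exhibit by hand the unique negative maximal isotropic $\Lambda_v^-\subset\C^4$ containing $v=(\sigma,\tau)$ and then read off its intersection with $\Lambda_+$.

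First I would consider the 2-plane
$$
\Pi\ :=\ \C\cdot(\sigma,0)\,+\,\C\cdot(0,\tau)\ \subset\ \Lambda_+\oplus\Lambda_+^*.
$$
Since $\sigma\ne0\ne\tau$ this is genuinely 2-dimensional and contains $v=(\sigma,0)+(0,\tau)$. It is isotropic: the pairings of $(\sigma,0)$ with itself and of $(0,\tau)$ with itself vanish because $\Lambda_+$ and $\Lambda_+^*$ are each isotropic, while the mixed pairing equals $\tau(\sigma)=\langle\sigma,\tau\rangle=0$ by hypothesis. Hence $\Pi$ is a maximal isotropic subspace of $\C^4$ passing through $v$.

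Next I would determine the sign of $\Pi$. Since $\tau\ne0$, the vector $(0,\tau)$ does not lie in $\Lambda_+$, so $\Pi\cap\Lambda_+=\C\cdot(\sigma,0)$, which is one-dimensional. By the standard parity criterion for the two components of $\mathrm{OGr}(2,4)$\,---\,two maximal isotropics in $\C^{2n}$ have the same sign if and only if the dimension of their intersection has the same parity as $n$\,---\,the space $\Pi$ must have the opposite sign to $\Lambda_+$, i.e.~$\Pi$ is negative.

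By the uniqueness statement recalled just before the Lemma, $\Pi$ must therefore equal $\Lambda_v^-$. The map $Q\to\PP(\Lambda_+)$ from \eqref{arr} then sends $[v]$ to $\Lambda_v^-\cap\Lambda_+=\C\cdot(\sigma,0)=[\sigma]$, as claimed. The only nontrivial ingredient is the parity criterion identifying the sign of $\Pi$; once that is in hand, everything else is an unwinding of the definitions in \eqref{arr} and \eqref{switch}.
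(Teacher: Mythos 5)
Your proof is correct and follows essentially the same route as the paper's: both exhibit the plane $\Langle(\sigma,0),(0,\tau)\Rangle$, check it is isotropic and two-dimensional, determine its sign from the one-dimensional intersection with $\Lambda_+$, and conclude it must be $\Lambda_v^-$. The only difference is that you spell out the parity criterion for the two components of the orthogonal Grassmannian, which the paper's proof invokes implicitly with the phrase ``so it must be negative.''
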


\begin{proof}
Denoting $v=(\sigma,\tau)$ we claim that
\beq{lamv}
\Lambda^-_v\=\Langle(\sigma,0),\,(0,\tau)\Rangle.
\eeq
Indeed the isotropic condition $q(v,v)=\tau(\sigma)=0$ implies the right hand side is isotropic. It is maximal because $\sigma\ne0\ne\tau$ and its intersection $\Langle(\sigma,0)\Rangle$ with $\Lambda^+$ is one dimensional because $\sigma\ne0$, so it must be negative. But it also contains $(\sigma,\tau)=v$, and $\Lambda_v^-$ is by definition the unique such subspace. Thus by \eqref{arr} the map $Q\to\PP(\Lambda_+)$ takes $(\sigma,\tau)$ to $\Lambda_v^-\cap\Lambda_+=\<\sigma\>$.
\end{proof}

Next we show the map to a fixed $\PP(\Lambda_-)$ is effectively given by the ratio $\tau/(\sigma\ip\,\omega)\in\PP^1$. Since $\Lambda_-\cap\Lambda_+$ is 1-dimensional we may pick a basis vector $\sigma'$.
 
\begin{lem}\label{lmm2}
The projection $Q\to\PP(\Lambda_-)$ takes $(\sigma,\tau)\mapsto\big[(\sigma',\lambda\sigma'\ip\,\omega)\big]$ where $\lambda=\tau/(\sigma\ip\,\omega)\in\PP^1$.
\end{lem}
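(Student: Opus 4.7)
The plan parallels the proof of Lemma~\ref{lmm}: identify $\Lambda_v^+$ explicitly for $v=(\sigma,\tau)=(\sigma,\lambda\,\sigma\ip\omega)$ and then read off $\Lambda_v^+\cap\Lambda_-$ via \eqref{arr}. Whereas Lemma~\ref{lmm} produced $\Lambda_v^-$ by simply splitting $v$ into its $\Lambda_+$ and $\Lambda_+^*$ parts, here we must ``twist'' by the skew-isomorphism $\sigma\mapsto\sigma\ip\omega\colon\Lambda_+\to\Lambda_+^*$.

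Concretely, I would propose
$$
\Lambda_v^+\,=\,\big\{(\xi,\,\lambda\,\xi\ip\omega)\,\colon\,\xi\in\Lambda_+\big\}.
$$
It is a $2$-plane through $v$; it is isotropic because for two of its elements the bilinear form evaluates to $\lambda[\omega(\xi_1,\xi_2)+\omega(\xi_2,\xi_1)]=0$ by antisymmetry of $\omega$; and it is \emph{positive} because $\Lambda_v^+\cap\Lambda_+=0$ (the condition $\lambda\,\xi\ip\omega=0$ forces $\xi=0$ as $\omega$ is nondegenerate and $\lambda\ne0$), so this intersection has even codimension in both and $\Lambda_v^+$ therefore lies on the same component of $\operatorname{OGr}$ as $\Lambda_+$.

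To intersect with $\Lambda_-$ I would first mirror the argument of Lemma~\ref{lmm} to obtain $\Lambda_-=\Langle(\sigma',0),(0,\sigma'\ip\omega)\Rangle$: the right-hand side is an isotropic $2$-plane (its generators pair trivially under $q$ since $(\sigma'\ip\omega)(\sigma')=\omega(\sigma',\sigma')=0$) meeting $\Lambda_+$ in the line $\<\sigma'\>=\Lambda_-\cap\Lambda_+$, and the negative maximal isotropic with prescribed $1$-dimensional intersection with $\Lambda_+$ is unique. A general vector $(\alpha\sigma',\mu\,\sigma'\ip\omega)\in\Lambda_-$ then belongs to $\Lambda_v^+$ iff $\mu=\lambda\alpha$, giving $\Lambda_v^+\cap\Lambda_-=\C\cdot(\sigma',\lambda\,\sigma'\ip\omega)$, which is the claim. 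The boundary cases $\lambda=0$ and $\lambda=\infty$ (i.e.\ $\tau=0$ or $\sigma=0$) are handled symmetrically via the alternative parametrization $(\lambda^{-1}\tau\ip\omega^{-1},\tau)$ from the footnote to~\eqref{st}.

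The only real obstacle is guessing the formula for $\Lambda_v^+$; once written down, isotropy, positivity and the intersection computation are all essentially one-line verifications.
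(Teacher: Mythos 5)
Your proof is correct and follows essentially the same route as the paper: propose $\Lambda_v^+ = \{(\xi,\,\lambda\,\xi\ip\,\omega)\colon\xi\in\Lambda_+\}$, verify it is maximal isotropic, positive and contains $v$, and intersect with $\Lambda_- = \Langle(\sigma',0),\,(0,\sigma'\ip\,\omega)\Rangle$. The only point of divergence is the positivity check: the paper observes $\Lambda_v^+\to\Lambda_+$ continuously as $\lambda\to0$, whereas you invoke the parity-of-intersection criterion via $\Lambda_v^+\cap\Lambda_+=0$; both are valid, though yours tacitly assumes $\lambda\ne0$ (the case $\lambda=0$ giving $\Lambda_v^+=\Lambda_+$ trivially).
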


\begin{proof}
Denoting $v=(\sigma,\tau)$ and $\lambda=\tau/(\sigma\ip\,\omega)\in\C\cup\{\infty\}$ as above, we claim
\beq{lamv+}
\Lambda^+_v\=\big\{(\sigma'',\,\lambda\sigma''\ip\,\omega)\,\colon\,\sigma''\in\Lambda_+\big\} \quad\text{if }\lambda\ne\infty
\eeq
and $\Lambda^+_v=\Lambda_+^*$ if $\lambda=\infty$. Indeed the right hand side of \eqref{lamv+} is isotropic, maximal, positive (since it tends to $\Lambda_+$ as $\lambda\to0$) and contains $v$.

Intersecting with $\Lambda_-=\Langle(\sigma',0),\,(0,\sigma'\ip\,\omega)\Rangle$ gives
\begin{equation}
\Langle(\sigma',\lambda\sigma'\ip\,\omega)\Rangle\,\in\,\PP(\Lambda_-),
\end{equation}
as claimed. In the basis $\{\sigma',\,\sigma'\ip\,\omega\}$ for $\Lambda_-$ this is just $\lambda\in\PP^1$.
\end{proof}

\subsection*{Local coordinates}
Choose linear coordinates $X,W$ on $\Lambda_+$ and dual coordinates $Y,Z$ on $\Lambda_+^*$ so that the quadratic form on $\C^4=\Lambda\oplus\Lambda^*$ is $XY+ZW$. Then $\Lambda_+$ is a positive maximal isotropic subspace with respect to the complex orientation $o=\partial_X\wedge\partial_Y\wedge\partial_Z\wedge\partial_W$ by \cite[Definition 2.2]{OT1}.

Thus $Q$ is $\{XY+ZW=0\}\subset\PP^3$ and, by Lemma \ref{lmm}, the projection $Q\to\PP(\Lambda_+)$ is $[X:Y:Z:W]\mapsto[X:W]$.

Choose $\omega=dX\wedge dW$ and write $(\sigma,\tau)=(X,W,Y,Z)=X\partial_X+W\partial_W+Y\partial_Y+Z\partial_Z$. Then $\sigma\ip\,\omega=X\partial_Z-W\partial_Y$ so by Lemma \ref{lmm2} the projection $Q\to\PP(\Lambda_-)$ takes $(X,Y,Z,W)$ to the ratio $\tau/\sigma\ip\,\omega=(Y,Z)/(-W,X)$, i.e.~$[-W:Y]=[X:Z]\in\PP^1$.

In summary we have
\begin{eqnarray} \nonumber
\PP^3\,\supset\,Q\,=\,\{XY+ZW=0\} &\rt\sim& \PP(\Lambda_+)\times\PP(\Lambda_-),\\ \label{Qq}
[X:Y:Z:W] &\Mapsto& \!\Big([X:W]=[-Z:Y]\Big)\times\Big([X:Z]=[-W:Y]\Big) \\ \nonumber
\text{with inverse}\hspace{3cm} \\ [ac:-bd:ad:bc] &\Mapsfrom& [a:b]\times[c:d]. \nonumber
\end{eqnarray}

\subsection*{Example}
Applying \eqref{Qq} to the running example $(x^d,y^d,x^iy^j,-x^{d-i}y^{d-j})$ of \eqref{run} shows the induced map $\C^2\take\{0\}\to\PP(\Lambda_+)\times\PP(\Lambda_-)$ is 
\beq{nonhom}
(x,y)\ \Mapsto\ \Big([x^i:-y^{d-j}],\ [x^{d-i}:y^j]\Big)
\eeq
with homotopy class \eqref{RH2}
\beq{pdog}
(d_1,d_2)\=\(i(d-j),\,(d-i)j\)\ \in\ \Z\oplus\Z\=\pi_3\(\PP(\Lambda_+)\times\PP(\Lambda_-)\)
\eeq
recovering \eqref{result1} and $\sqrt e\;(\underline E,s)=d_1-d_2=d(i-j)$.

\subsection{Homotopy and homology}
We take \eqref{RH2} as the definition of the topological refinement $(d_1,d_2)$ of $\sqrt e\;(\underline E,s)=d_1-d_2$. That is, we write the class of $\PP\(s|_{S^3}\)$ as $(d_1,d_2)$ with respect to $\pi_3(Q)=\pi_3\(\PP(\Lambda_+)\times\PP(\Lambda_-)\)=\Z\oplus\Z$. 

We start by proving \eqref{RH1}. Via \eqref{Qt2} the $S^1$ bundle $\wt Q\to Q$ becomes the projection  $S^3\times S^2_+\to S^2_+\times S^2_-$ taking $(a,b)\in\wt Q$ to the unique complex structures $(J_+,J_-)$ (one oriented, one anti-oriented) which preserve the plane $\langle a,b\rangle\subset\R^4$. That is $(a,b)\mapsto(J_+,J_-)\in S(\im\HH)\times S(\im\HH)$ such that $a=J_+b=bJ_-$\,, so $(J_+,J_-)=(ab^{-1},b^{-1}a)$. Composing with \eqref{Qt2} gives
\beq{qts2}
S^3\times S^2_+\ \cong\ \wt Q\ \To\ S^2_+\times S^2_-\,,\quad (a,J)\Mapsto(J,\,a^{-1}Ja).
\eeq
Consider its restriction to $S^3\times\{J\}\to S^2_+\times S^2_-\;$. To the first factor it is the constant map to $J\in S^2_+$. To the second it is the Hopf map $a\mapsto a^{-1}Ja$. This explains the first column of the matrix below. Similarly on $\{a\}\times S^2_+$ the map to $S^2_+$ is the identity, and to $S^2_-$ is homotopic to the identity (by homotoping $a\in\HH$ to $1\in\HH$). Thus the map on $\pi_3$ induced by \eqref{qts2} is\vspace{-3mm}
\beq{mat}
\xymatrix@C=60pt{
\pi_3(S^3\times S^2_+)\ \stackrel{\eqref{ZZ}}\cong\ \Z\oplus\Z\ 
\ar[r]^-{\left(\begin{smallmatrix}0&1 \\ 1&1\end{smallmatrix}\right)}&
\ \Z\oplus\Z\ \cong\ \pi_3(S^2_+\times S^2_-).}
\eeq
Using the left hand description from \eqref{ZZ}, we write the homotopy class of $s|_{S^3}$ in $\pi_3\(\wt Q\)$\,---\,or equivalently $\PP\(s|_{S^3}\)\in\pi_3(Q)$\,---\,as $(s_1,s_2)$. Recall from \eqref{sss} and \eqref{OH1} that $s_1$ is $\deg s_+=\deg s_-=\sqrt e\;(\underline E,s)$.

Instead writing $\PP\(s|_{S^3}\)\in\pi_3(Q)$ in the basis
\beq{d1d2}
\pi_3(Q)\=\pi_3\(\PP(\Lambda_+)\times\PP(\Lambda_-)\)\ \cong\ \Z\oplus\Z\ \ni\ (d_1,d_2),
\eeq
we find from \eqref{mat} and the sign switch \eqref{switch} that $(d_1,d_2)=(s_1+s_2,s_2)$.

This defines $(d_1,d_2)$ with $d_1-d_2=\sqrt e\;(\underline E,s)$ and proves \eqref{RH1}.\medskip

Now suppose $s$ is homogeneous as in \eqref{RH5}, giving a commutative diagram\vspace{-1mm}
$$
\xymatrix@R=15pt@C=30pt{S^3 \ar[d]_h\ar[r]^{\PP\(s|_{S^3}\)}& Q \ar@{=}[d] \\
\PP^1 \ar[r]^-{\underline s}& \PP(\Lambda_+)\times\PP(\Lambda_-)}
$$
with $h$ the Hopf map. Now $H_2(\,\cdot\,)=\pi_2(\,\cdot\,)$ for both $\PP^1$ and $\PP^1\times\PP^1$ so
by \eqref{prod} the homotopy class of $\underline s$ is $(d_-\;,d_+)\in\pi_2\(\PP(\Lambda_+)\times\PP(\Lambda_-)\)$, where $d_\pm:=\int_{\PP^{n-1}}\underline s^*\big[\PP(\Lambda_\pm)\big]$. And since $(\ \cdot\ )\circ h\,\colon\pi_2(S^2)\to\pi_3(S^2)$ is the squaring map\footnote{Represent $d\in\pi_2(\PP^1)$ by $[x:y]\mapsto[x^d:y^d]$ and lift to $S^3\ni(x,y)\mapsto(x^d,y^d)\in\C^2\take\{0\}\simeq S^3$. This has degree $d^2$, so represents $d^2\in\pi_3(S^3)\xrightarrow[\raisebox{3pt}{\tiny{$\sim$}}]{h_*}\pi_3(S^2)$.}
$\Z\to\Z, \ d\mapsto d^2$, this proves
\eqref{RH5},
$$
\PP\(s|_{S^3}\)\=\underline s\circ h\=(d_-^2\;,\,d_+^2)\ \in\ \Z\oplus\Z\=\pi_3\(\PP(\Lambda_+)\times\PP(\Lambda_-)\).
$$


\subsection*{Example}
Our running example \eqref{run} is homogeneous if and only if $d=i+j$. In that case \eqref{nonhom} gives $\underline s=[x^i:-y^i]\times[x^j:y^j]$ and so $(d_-\;,d_+)=(i,j)$. Therefore $(d_1,d_2)=(i^2,j^2)$, agreeing with \eqref{pdog}.

\subsection{Projectivised normal cone} Here we prove \eqref{RH6} for \emph{homogeneous} $s$ of degree $d$. (Note the isotropic deformation $(x(x-t),y(y-t),xy,-(x-t)(y-t))$ of $(x^2,y^2,xy,-xy)$ shows that \eqref{RH6} is not invariant under general non-homogeneous deformations of $s$.) We actually prove a version of the formula for \emph{any} $n\ge2$.

Letting $d_\pm=\int_{\PP^{n-1}}\underline s^*\big[\PP(\Lambda_\pm)\big]$ as in \eqref{OH5} then $d^{n-1}=d_++d_-$ by \eqref{homd}. Reusing $\pm$ to denote the sign $(-1)^{n-1}$, the generalisation of \eqref{RH6} we will prove is
\beq{RH6n}\tag{{\bf RH6$_n$}}
\big[\PP(C)\big]\=dd_\pm[\PP(\Lambda_+)\big]+dd_\mp[\PP(\Lambda_-)\big]\ \in\ H^{2n-2}(Q,\Z),
\eeq
where $C:=C_{Z(s)/\C^n}$. To prove it we just need to show $\int_{\PP(C)}\big[\PP(\Lambda_+)\big]=dd_+$ and $\int_{\PP(C)}\big[\PP(\Lambda_-)\big]=dd_-\;$, by \eqref{prod}. We already know the difference of these numbers,
$$
\int_{\PP(C)}\big[\PP(\Lambda_+)\big]-\big[\PP(\Lambda_-)\big]\=dd_+-dd_-
$$
by (\ref{OH5}, \ref{OH6}), so we are left with showing their sum
\beq{toprove}
\int_{\PP(C)}\big[\PP(\Lambda_+)\big]+\big[\PP(\Lambda_-)\big]\ =\,\int_{\PP(C)}H^{n-1}\ \text{ equals }\ dd_++dd_-\stackrel{\eqref{homd}}=\,d^n.
\eeq
On the left hand side we have used that $[\PP(\Lambda_+)\big]+[\PP(\Lambda_-)\big]\in H^{2n-2}(Q)$ is the restriction of the $(n-1)$th power $H^{n-1}$ of the hyperplane class $H$ from $\PP(E)\supset Q$. So to prove \eqref{toprove} we may (and do) push forward to $\PP(E)$ instead of working in $Q$.\medskip

As in Section \ref{nother} we compactify $\C^n\subset\PP^{n}$ and let $z\in\Gamma\(\cO_{\PP^n}(1)\)$ be the section vanishing on the infinity divisor $\PP^n\take\C^n=\PP^{n-1}$. The rational section $(s,1)$ of the trivial bundle $\underline E\oplus\cO$ over $\PP^n$ has poles of order $d$ over $z=0$, so the regular map
$$
\xymatrix@C=40pt{
\cO(-d)\ \ \ar@{^(->}[r]^-{z^d(s,\,1)}& \ \underline E\oplus\cO\ \text{ over }\ \PP^n}
$$
defines a line subbundle. It is classified by a map (the projectivisation of $(s,1)$) $\overline s\colon\PP^n\to\PP(E\oplus\C)$ such that $\overline s^*\cO(-1)\cong\cO(-d)$. Therefore
\beq{d2}
\int_{\PP^n}\overline s^*H_2^n\=\int_{\PP^n}(dH_1)^n\=d^n,
\eeq
where $H_1$ and $H_2$ are the hyperplane classes of $\PP^n$ and $\PP(E\oplus\C)$ respectively.

Now we deform the graph $\Gamma_{\!\overline s}\subset\PP^n\times\PP(E\oplus\C)$ through the graphs of $\overline s_t=[s:t]\in\PP(E\oplus\C)$. Taking the flat limit $t\to0$ in the Hilbert scheme of $\C_t\times\PP^n\times\PP(E\oplus\C)$ defines a cycle $C_1+\overline C\subset\PP^n\times\PP(E\oplus\C)$\,---\,described below\,---\,with
\beq{d22}
\int_{C_1}H_2^n+\int_{\overline C}H_2^n\=d^n
\eeq
by \eqref{d2}. Here $H_2$ is the pullback of $H_2$ from $\PP(E\oplus\C)$ to $\PP^n\times\PP(E\oplus\C)$.

Considering $\PP(E\oplus\C)$ to be the projective completion of $E\subset\PP(E\oplus\C)$ we see that over $\C^n\subset\PP^n$ the section $\overline s_t$ is just $s/t$. Therefore one component of the flat $t\to0$ limit is the projective completion $\overline C=\PP\(C\oplus\cO_{Z(s)}\)$ of the cone $C\subset\underline E|_{Z(s)}$.

The other component $C_1$ is given by taking the graph of the section $\lim_{t\to0}\overline s_t=\overline s_0=[s:0]\in\PP(E\oplus\C)$ where this is defined\,---\,i.e.~away from $\{0\}=Z(s)\subset\C^n\subset\PP^n$\,---\,and then taking its closure. Thus it is birational to the base $\PP^n$ (and isomorphic to it away from $Z(s)$). But, over $\C^n\take\{0\}$, the homogeneity of $s$ means $[s:0]$ is just the composition
$$
\C^n\take\{0\}\,\To\ \PP^{n-1}\,\rt{\underline s}\,\PP(E)\ \subset\ \PP(E\oplus\C)
$$
where $\underline s$ is as in \eqref{facs}. This has $(n-1)$-dimensional image in $\PP(E\oplus\C)$ so $\int_{C_1}H_2^n=0$ and \eqref{d22} becomes
$$
d^n\=\int_{\overline C}H_2^n\=\int_{\PP(C)}H^{n-1}.
$$
This proves \eqref{toprove} and so \eqref{RH6n} for all $n\ge2$.

\begin{rmk}
Paolo Aluffi suggested a quicker proof of \eqref{toprove}. Let $\rho\colon\Bl_0\C^n\to\C^n$ be the blow up of $\C^n$ in the origin with exceptional divisor $D\cong\PP^{n-1}$. Then $\rho^*s$ vanishes on $D$ with multiplicity $d$, so $Z(\rho^*s)=dD$ (plus a sum of other irreducible divisors which are empty since $Z(s)$ is supported at the origin). Therefore, by the birational invariance of Segre classes, $s(Z(s),\C^n)=\int_{\PP(C)}H^{n-1}$ is
$$
\rho_*\,s(dD,\Bl_0\C^n)\=\rho_*\(dD - d^2D^2 +\dots + (-1)^{n-1} d^n D^n\)\=d^n.
$$
\end{rmk}

\subsection{$K$-theoretic}\label{holl}
We use $K$-theory to give a holomorphic definition of the refinement $(d_1,d_2)$ which we will prove coincides with \eqref{d1d2}. So we work over $\C^2$ with trivial quadratic bundle $\underline E$ and suppose the isotropic section $s$ is holomorphic. Thus we can form the 2-periodic complex $\underline M\udot$ \eqref{Mdot} and prove \eqref{RH7}.

\begin{prop}
Suppose $s$ is holomorphic with homotopy class \eqref{d1d2} given by $\big[\PP\(s\big|_{S^3}\)\big]=(d_1,d_2)\in\pi_3\(\PP(\Lambda_+)\times\PP(\Lambda_-)\)$. Then
$$
\operatorname{length}\cH^+(\underline M\udot)\=
d_1, \quad \operatorname{length}\cH^-(\underline M\udot)=d_2.
$$
\end{prop}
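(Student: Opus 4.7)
The plan is to work in the holomorphic normal form of Section~\ref{ems}: split $\underline E=\underline\Lambda\oplus\underline\Lambda^*$ with $\Lambda:=\Lambda_+$ and write $s=(f\sigma,\,g\,\sigma\ip\omega)$ as in \eqref{RH3}, with $\sigma=(a,b)$ primitive (i.e.\ $\gcd(a,b)=1$). The hypothesis $Z(s)=\{0\}$, combined with the set-theoretic identity $Z(s)=Z(\sigma)\cup\(Z(f)\cap Z(g)\)$ (using that $\sigma$ and $\sigma\ip\omega$ have the same vanishing locus), forces both $Z(\sigma)$ and $Z(f,g)$ to be zero-dimensional. Over the UFD $\C[x,y]$ this automatically gives $\gcd(a,b)=1=\gcd(f,g)$, which is the key algebraic input.

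In these coordinates both consecutive Clifford differentials of \eqref{mdot} factor as rank-one outer products
\[
M_1\=\begin{pmatrix} b \\ -a \end{pmatrix}\!(-g,-f)\ \colon\ \underline M^+\To\underline M^-, \qquad M_2\=\begin{pmatrix} f \\ -g\end{pmatrix}\!(a,b)\ \colon\ \underline M^-\To\underline M^+,
\]
in the bases $\{1,\,e^1\!\wedge e^2\}$ of $\underline M^+$ and $\{e^1,e^2\}$ of $\underline M^-$ (where $e^1,e^2$ is a basis of $\Lambda^*$). The vanishing of their compositions is precisely the isotropy condition $q(s)=0$.

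The coprimality $\gcd(f,g)=1$ then forces $\ker M_1=\{(ft,-gt):t\in\C[x,y]\}\cong\C[x,y]$, and under this identification the image $\im M_2=\{(f\alpha,-g\alpha):\alpha\in(a,b)\}$ maps onto the ideal $(a,b)\subset\C[x,y]$. Hence
\[
\cH^+(\underline M\udot)\ \cong\ \C[x,y]/(a,b)\=\C[x,y]/(\sigma),
\]
which has length $d_1=\operatorname{length}Z(\sigma)$ by \eqref{RH3}. The analogous computation for $\cH^-$ uses the coprimality $\gcd(a,b)=1$ to identify $\ker M_2=\{(bs,-as):s\in\C[x,y]\}\cong\C[x,y]$ and matches $\im M_1$ to the ideal $(f,g)$, giving $\cH^-(\underline M\udot)\cong\C[x,y]/(f,g)$ of length $d_2=\operatorname{length}Z(f,g)$, again by \eqref{RH3}.

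The main obstacle is the initial setup: producing the normal form $s=(f\sigma,\,g\,\sigma\ip\omega)$ with $\sigma$ primitive and deducing the automatic coprimality of $(f,g)$ from $Z(s)=\{0\}$. Once these are secured, the cohomology computation reduces to mechanically exploiting the rank-one factorisations of $M_1$ and $M_2$, a feature special to $n=2$ (where $\Lambda$ is skew self-dual via $\omega$).
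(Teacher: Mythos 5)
Your algebraic computation of $\cH^\pm(\underline M\udot)$ is correct and is essentially the paper's argument reformulated: instead of invoking the exactness of the Koszul complexes for $\sigma_0$ and $(f,g)$ (in the paper's notation from \eqref{sigtau} and \eqref{ip}, where your primitive $\sigma$ is the paper's $\sigma_0$), you exploit the rank-one outer-product factorisations of the two Clifford differentials, which is a slightly cleaner packaging of the same content. The normal-form setup and the deduction of coprimality of $(a,b)$ and $(f,g)$ from $Z(s)$ being $0$-dimensional also match what the paper does, modulo the paper's remark that one may need to perturb $\Lambda_+$ to ensure neither component of $s$ vanishes identically.

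However, there is a genuine gap at the last step, where you write that the lengths of $\cH^\pm$ equal $d_1, d_2$ ``by \eqref{RH3}.'' This is circular: \eqref{RH3} is not an independent input but a \emph{consequence} of this Proposition (the paper explicitly notes, just after the proof, that \eqref{odd} and \eqref{even} give \eqref{RH3}). Recall that $(d_1,d_2)$ are defined topologically in \eqref{d1d2} as the components of the homotopy class $\big[\PP(s|_{S^3})\big]\in\pi_3\(\PP(\Lambda_+)\times\PP(\Lambda_-)\)$, so the real content of the Proposition is the bridge from that homotopy-theoretic definition to the algebraic lengths you computed. The paper supplies this bridge using Lemmas \ref{lmm} and \ref{lmm2}: the projection of $\PP(s|_{S^3})$ to $\PP(\Lambda_+)$ is $[\sigma_0]$ on all of $S^3$ (extending by continuity from the locus where $\sigma\ne0\ne\tau$), so $d_1=\big[\PP(\sigma_0|_{S^3})\big]$, which by the classical Hopf index (\eqref{H2}, \eqref{H3}) equals $\operatorname{length}Z(\sigma_0)$; similarly the projection to $\PP(\Lambda_-)$ is the ratio $f/g\colon S^3\to\PP^1$, giving $d_2=\operatorname{length}Z(f,g)$. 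Without some such topological argument, your identification of the lengths with the $d_i$ is unjustified.
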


In particular\,---\,in the special case of $n=2$\,---\,the lengths of these cohomology sheaves is a deformation invariant; not just their difference $\sqrt e\;(\underline E,s)=d_1-d_2$.

\begin{proof}
Write $s=(\sigma,\tau)$ with respect to $E=\Lambda_+\oplus\overline\Lambda_+\cong\Lambda_+\oplus\Lambda_+^*$, as in Footnote \ref{fono}, for some positive maximal isotropic subspace $\Lambda_+$. By perturbing $\Lambda_+\subset E$ if necessary we may assume that neither of $\sigma$ and $\tau$ are identically zero. Let $(f=0)$ and $(g=0)$ be the maximal divisors contained in $Z(\sigma)$ and $Z(\tau)$ respectively, so
\beq{sigtau}
\sigma\=f\sigma_0, \quad \tau\=g\tau_0,
\eeq
where $\sigma_0\in\Gamma(\underline\Lambda_+)$ and $\tau_0\in\Gamma(\underline\Lambda_+^*)$ have 0-dimensional zero loci (possibly empty). Since $Z(s)=Z(\sigma)\cap Z(\tau)$ is supported at the origin, the same is true of $Z(f)\cap Z(g)$; in particular $f$ and $g$ are coprime.

Because $\dim\Lambda_+=2$ a choice of nonzero $\omega\in\bigwedge^2\Lambda_+^*$ trivialises $\bigwedge^2\underline\Lambda_+^*$. The identity $\langle \sigma,\tau\rangle\,\omega\equiv\tau\wedge(\sigma\ip\,\omega)$ shows the isotropic condition $\langle \sigma,\tau\rangle=0$ implies that $\tau$ and $\sigma\ip\,\omega$ are proportional. In fact, scaling $f$ by a unit if necessary, we get
\beq{ip}
\tau_0\=\sigma_0\;\ip\,\omega.
\eeq
We now show that $d_1$ and $\cH^+(\underline M\udot)$ enumerate the zeros of $\sigma_0$ while $d_2$ and $\cH^-(\underline M\udot)$ enumerate the zeros of $(f,g)$. Recall from \eqref{mdot} that $\underline M\udot$ is
$$
\xymatrix@C=32pt{
\dots \ar[r]& \underline\Lambda_+^* \ar[rr]^-{(f\sigma_0\ip\ ,\,g\;\tau_0\wedge\ )}&& \bigwedge^0\underline\Lambda_+^*\oplus\bigwedge^2\underline\Lambda_+^* \ar[rr]^-{(g\;\tau_0\wedge\ ,\,f\sigma_0\ip\ )}&& \underline\Lambda_+^* \ar[r]& \dots}
$$
Using the trivialisation $\omega$ of $\bigwedge^2\underline\Lambda_+^*$ and \eqref{ip} we write this as
$$
\xymatrix@C=32pt{
\dots \ar[r]& \underline\Lambda_+^* \ar[rr]^-{(f\sigma_0\ip\ ,\,g\sigma_0\ip\ )}&& \cO\oplus\cO \ar[rr]^-{(-g\;\sigma_0\ip\ ,\,f\sigma_0\ip\ )}&& \underline\Lambda_+^* \ar[r]& \dots}
$$
Since codim\,$Z(\sigma_0)=2$ the components of $\sigma_0$ form a regular sequence and its Koszul complex is exact. Furthermore $f,g$ are nonzero. Thus the kernel of the first labelled arrow is the image of $\sigma_0\ip\,\colon\bigwedge^2\underline\Lambda_+^*\to\underline\Lambda_+^*$ while the image of the second labelled arrow is the product of this with the ideal $(f,g)$. Thus
\beq{odd}
\cH^-(\underline M\udot)\ \cong\ \cO/(f,g).
\eeq
Similarly the coprimality of $f,g$ implies the exactness of the Koszul complex of $(f,g)$, making the kernel of the second labelled arrow equal to im$\,(f,g)\,\colon\cO\to\cO\oplus\cO$. Inside this copy of $\cO$ is im$\(\sigma_0\ip\,\colon\underline\Lambda_+^*\to\cO\)$; this is the image of the first arrow. Thus
\beq{even}
\cH^+(\underline M\udot)\ \cong\ \cO_{Z(\sigma_0)}.
\eeq
By definition $d_1$ is the homotopy class of the projection of $s|_{S^3}=(\sigma,\tau)|_{S^3}$ via $Q\to\PP(\Lambda_+)$. By Lemma \ref{lmm} this projection is $[\sigma]\in\PP(\Lambda_+)$ at any point of $S^3$ at which $\sigma\ne0\ne\tau$. Since $\sigma=f\sigma_0$ and $\tau=g\sigma_0\ip\,\omega$ this is $[\sigma_0]\in\PP(\Lambda_+)$ at \emph{all} points of $S^3$ by continuity. That is,
$$
d_1\=\big[\PP\(\sigma_0|_{S^3}\)\big]\ \in\ \pi_3(\PP(\Lambda_+))\ \cong\ \Z.
$$
But by \eqref{H3} this is precisely the length of \eqref{even}.

Similarly Lemma \ref{lmm2} shows that $d_2$ is the homotopy class of the map $S^3\to\PP^1$ given by the ratio
$$
(\sigma\ip\,\omega)/\tau\big|_{S^3}\=f/g\big|_{S^3}\ \colon S^3\To\PP^1.
$$
So $d_2$ is the length of \eqref{odd} by \eqref{H3}.
\end{proof}

\subsection{Deforming to simple zeros}\label{defsz} Notice that \eqref{odd}, \eqref{even} give \eqref{RH3}. Furthermore
the descriptions (\ref{sigtau}, \ref{ip}) of $s=(\sigma,\tau)$ also allow us to prove \eqref{RH4} and the fact that\,---\,when $n=2$\,---\,the section $s_0:=s$ can be deformed in a family of holomorphic isotropic sections $s_t$ which have simple zeros close to $0\in\C^2$ for $t\ne0$.

We first deform $\sigma_0$ in a family of sections $\sigma_t\in\Gamma(\underline\Lambda_+)$ which have simple zeros close to $0\in\C^2$ for $t\ne0$. Next we
set $\tau_t:=\sigma_t\ip\,\omega$ so that $\langle\sigma_t,\tau_t\rangle=0$. Then similarly deform $(f,g)$ in a family $(f_t,g_t)$ which has, for $t\ne0$, only simple zeros close to $0\in\C^2$ which are disjoint from those of $\sigma_t$. This gives a family of isotropic sections
$$
s_t\ :=\ (f_t\;\sigma_t,\,g_t\;\tau_t\)\,\in\,\Gamma(\underline E)
$$
with simple zeros for $t\ne0$, specialising to $s=(\sigma,\tau)=(f\sigma_0,g\;\tau_0)$ at $t=0$.

In particular we can calculate the above quantities $d_1=\operatorname{length}\cH^+(\underline M\udot)$ and $d_2=\operatorname{length}\cH^-(\underline M\udot)$
at $t\ne0$ where $s_t$ has simple zeros. There we get $+1$ at each of the zeros of $\sigma_t$ and $-1$ at each of the zeros of $(f_t,g_t)$ as in \eqref{RH4}.

\subsection{Another model}
There is a more attractive model of rank 4 orthogonal vector spaces and their isotropic vectors than we used in Section \ref{ems}. It is
$$
(E,q)\=\(\!\End V,\,\det\!\),\  \text{ where }\,V\,=\,\C^2.
$$
Thus the quadric $Q$ is the space of rank 1 endomorphisms up to scale. Writing them as $F\otimes v\in V^*\otimes V$, we choose the orientation so that the positive (respectively negative) maximal isotropic subspaces are given by fixing $F$ and varying $v$ to give $\langle F\rangle\otimes V$ (respectively fixing $v$ and varying $F$ to give $V^*\otimes\langle v\rangle$). The map to $\PP^1\times\PP^1$ of Lemmas \ref{lmm}, \ref{lmm2} is therefore just $[v]\times[F]$ (or equivalently $(\coker,\,\ker)$).

A slight generalisation is given by 
$$
(E,q)\=\(\!\Hom(M^+,M^-),\,\det\!\)
$$
where $M^\pm$ are 2-dimensional vector spaces with a fixed isomorphism
\beq{deti}
\bigwedge\nolimits^{\!2}M^+\ \cong\ \bigwedge\nolimits^{\!2}M^-.
\eeq
Thus any $\phi\in\Hom(M^+,M^-)$ has a determinant $\bigwedge^2\phi\in\bigwedge^2(M^+)^*\otimes\bigwedge^2M^-\cong\C$ which is canonically a scalar, defining a quadratic form on $\Hom(M^+,M^-)$. The isotropic vectors are rank $\le1$ homomorphisms $F\otimes v$ with $F\in(M^+)^*$ and $v\in M^-$.

\begin{rmk}
When $E=\Lambda\oplus\Lambda^*$ is a sum of positive maximal isotropic subspaces we have already seen an example of this in \eqref{M} with
\begin{multline*}
M^+\,=\,\C\oplus\bigwedge\nolimits^{\!2}\Lambda^*\,\text{ and }\,M^-\,=\,\Lambda^*\,\text{ so that} \\
\Hom(M^+,M^-)\=\Lambda^*\ \oplus\ \bigwedge\nolimits^{\!2}\!\Lambda\otimes\Lambda^*\ \cong\ \Lambda^*\oplus\Lambda\ \cong\ E.
\end{multline*}
In this situation we have seen \eqref{st} that an isotropic vector $(\sigma,\tau)\in\Lambda\oplus\Lambda^*$ can be written as $(\sigma,\,\lambda\sigma\ip\,\omega)$, where $\lambda=\tau/\sigma\ip\,\omega$, so we can take
$$
F\otimes v\ :=\ (\lambda\;,\,\omega^{-1})\otimes(\sigma\ip\,\omega)\ \in\ (M^+)^*\otimes M^-.
$$
Alternatively we can take $F\otimes v=(1,\,\lambda^{-1}\omega^{-1})\otimes\tau$. The first expression works for $\sigma\ne0$ (i.e.~$\lambda\ne\infty$) while the second works for $\tau\ne0$ (i.e.~$\lambda\ne0$).
\end{rmk}

To get a torus localisation formula for the refined index $(d_1,d_2)$ we allow more general spin modules $M^\pm$ but then ask for them and their sections $F,v$ to be equivariant with respect to a $T=\C^*$ action on $\C^2$, for $(\C^2)^T\!,\,Z(F),\,Z(v)$ to be supported at the origin, and for \eqref{deti} to hold \emph{equivariantly}. Then by \eqref{H8} we have
\begin{eqnarray}\nonumber
d_1 &\=& \operatorname{length}Z(v)\=e^T(M^-)\big/e^T\(N_{0/\C^2}\), \\
d_2 &\=& \operatorname{length}Z(F)\=e^T\((M^+)^*\)\big/e^T\(N_{0/\C^2}\), \label{fv}
\end{eqnarray}
proving \eqref{RH8}. Writing $M^+=\t^a\oplus\t^b$ and $M^-=t^i\oplus\t^j$ then \eqref{deti} gives $a+b=i+j$. And $(\t^a)^*\otimes(\t^i\oplus\t^j)$ is a positive maximal isotropic subspace for the quadratic form det (since it consists entirely of homomorphisms of rank $\le1$). Thus by \eqref{OH8},
$$
\sqrt e\;(\underline E,s)\=\frac{e^T\(\t^{i-a}\oplus\t^{j-a}\)}{e^T(N_{0/\C^2})}\=\frac{(i-a)(j-a)\;t^2}{e^T(N_{0/\C^2})}\,.
$$
Since $a+b=i+j$ this is compatible with \eqref{fv}, as it should be,
$$
\sqrt e\;(\underline E,s)\=\frac{(ij-ab)\;t^2}{e^T(N_{0/\C^2})}\=
\frac{e^T(M^-)-e^T((M^+)^*)}{e^T(N_{0/\C^2})}\=d_1-d_2.
$$
Finally we note that in this setting the 2-periodic complex $\underline M^\bullet$ \eqref{Mdot} takes the form
\beq{Mdo}
\dots\To\underline M^+\rt{F\otimes v}\underline M^-\rt{F\otimes v}\underline M^+\To\dots
\eeq
Since $F\otimes v\in (M^+)^*\otimes M^-$ this first arrow is clear. The second is the composition
$$
\underline M^-\rt{\wedge v}\bigwedge\nolimits^{\!2}\underline M^-\xrightarrow[\sim]{\eqref{deti}}\bigwedge\nolimits^{\!2}\underline M^+\rt{F\wedge1}\underline M^+.
$$
This shows \eqref{Mdo} is a complex (because $v\wedge v=0=F\wedge F$) with cohomology
$$
\cH^+(\underline M\udot)\ \cong\ \bigwedge\nolimits^{\!2}\underline M^-\big/v\wedge\underline M^-\ \cong\ \cO/(\im v), \qquad \cH^-(\underline M\udot)\ \cong\ \cO/(\im F),
$$
cf. (\ref{odd}, \ref{even}).

\subsection*{Example} Finally we calculate \eqref{RH8} in our running example \eqref{run}. We let $\C^*$ act on $\C^2$ with weights $(1,-1)$ on $x,y$ and take the section
$$
\mat{y^d}{x^{d-i}y^{d-j}}{x^iy^j}{x^d}\=\(y^j,\,x^{d-i}\)\otimes\(y^{d-j},\,x^i\)
$$
of $\underline E=(\underline M^+)^*\otimes\underline M^-=(\underline\t^{-j}\oplus\underline\t^{d-i})\otimes(\underline\t^{j-d}\oplus\underline\t^i)$.  Then \eqref{fv} gives
\beqa
d_1 &\=& \frac{e^T(M^-)}{e^T(N_{0/\C^2})}\=\frac{i(j-d)\;t^2}{-t^2}\=(d-j)i, \\
d_2 &\=& \frac{e^T((M^+)^*)}{e^T(N_{0/\C^2})}\=\frac{(i-d)j\;t^2}{-t^2}\=(d-i)j.
\eeqa
in agreement with \eqref{result1}.

\section{Applications to virtual cycles}

\subsection{Cosection localised Behrend-Fantechi virtual cycles}
If a projective scheme $M$ has a Behrend-Fantechi virtual cycle\,---\,or if it is a Fulton-MacPherson refined Euler class $e(\underline E,s)$\,---\,and has both virtual and actual dimension zero,\footnote{We can generalise this to asking that $\dim M=\vd(M)\ge0$ by integrating an insertion (of cohomological degree $2\vd(M)$) which has the special property that it cuts $M$ down to dimension zero by imposing an effective algebraic condition which locally, in holomorphic Kuranishi charts, is just intersection with a smooth codimension-$\vd$ subvariety of the local ambient space.\label{insert}} then its length \emph{is} the virtual cycle,
$$
\deg[M]^{\vir}\=\operatorname{length}M\ \ge\ 0.
$$

Even if it has virtual dimension 0 but strictly positive actual dimension, it may still have a cosection which vanishes in dimension zero. In this situation Kiem and Li \cite{KL} define a 0-dimensional localisation of the virtual cycle, and one might ask if its length is the degree of the virtual cycle.

We can interpret 
our main example \eqref{eg} as showing that this is not the case. We will see it gives a 0-dimensional (ordinary, not square rooted) refined Euler class $e\(\underline \Lambda,s_1=(x^2,-xy)\)$, cosection-localised to the zero locus $x^2=0=xy=y^2$ of $s_2=(y^2,xy)$, but which turns out to be 0. The calculation is related to (but not the same as\,---\,in fact it is a deformation of) the cosection localisation calculation of Section \ref{cosec}; see Footnote \ref{6}.\smallskip

We use the notation of Section \ref{cosec}. So we work over $\C^2_{x,y}$ with $\underline \Lambda\,=\,\underline \C^2$ and
$$
\text{section } s_1\,=\,(x^2,-xy)\,\in\,\Gamma(\underline\Lambda) \text{ and cosection } s_2\,=\,(y^2,xy)\,\in\,\Gamma(\underline\Lambda^*).
$$
Note $s_2\circ s_1=0$ implies the cosection condition that the composition
$$
T_Y|_{Z(s_1)}\ \rt{ds_1|_{Z(s_1)}}\ \underline\Lambda|_{Z(s_1)}\ \rt{s_2}\ \cO_{Z(s_1)}
$$
vanishes.

Fulton-MacPherson intersection theory, or Behrend-Fantechi virtual cycle theory, first replaces the graph $\Gamma_{\!s_1}$ by the cone\footnote{Contrast this with \eqref{cone1}, where $s_1$ and $s_2$ were scaled simultaneously. Here only $s_1$ is scaled.\label{6}}
$$
C\,:=\,C_{Z(s_1)/\C^2}\,=\,\lim_{t\to\infty}\Gamma_{\!ts_1}\,\subset\,\underline\Lambda\,\text{ with ideal }\,(x^2, xy, Xy+Wx)
$$
in the linear coordinates $X,W$ on the fibres of $\underline\Lambda=\underline\C^2$ that we used in Section \ref{cosec}.

Over $y\ne0$ the ideal is $(x,X)$, so one irreducible component of the cone is $\C_{0,y}\times\C_{0,W}$ cut out by $x=0=X$.

On $X\ne0$ the ideal is $y=-xW/X,\ x^2=0$. On $W\ne0$ it is $x=-yX/W,\ y^2=0$. So the other component is $\C^2_{X,W}$ supported over the origin $x=0=y$ but with multiplicity 2 everywhere. That is, as cycles,
$$
C\=(\C_{0,y}\times\C_{0,W})+2\;\C^2_{0,0,X,W}\ \subset\ \underline\Lambda\=\C^2_{x,y}\times\C^2_{X,W}.
$$
Intersecting with the 0-section $0_{\underline\Lambda}$ would give a cycle supported over the $y$-axis. But Kiem-Li \cite{KL} give us a way to localise further to the zeros of the cosection
$$
\Gamma(\underline\Lambda^*)\ \ni\ s_2\=(y^2,\,xy)\ \colon\ \underline\Lambda\ \To\ \cO_{\C^2_{x,y}},
$$
i.e.~to the origin $0\in\C^2_{x,y}$. Notice that $s_2$ vanishes on $C\subset\underline\Lambda$. Their recipe \cite[Equation 2.1]{KL} treats the two components of $C$ differently.

For the first we work over the $y$-axis on which the cone $C\subset\underline\Lambda$ is the line subbundle $\C_W\subset\C^2_{X,W}$. The cosection restricts to $(y^2,0)$ which vanishes on a divisor, so we can omit the blowing up used in \cite[Equation 2.1]{KL} and simply take \emph{minus} the vanishing locus of $(y^2,0)$, i.e.~$-2$ times the origin.

The second component is already supported over the origin $0\in\C^2_{x,y}$ so by \cite[Equation 2.1]{KL} we simply intersect it with the 0-section $0_{\underline\Lambda^*}=(X=0=W)$ to get $+2$ times the origin.

In total we get the origin with multiplicity $2-2=0$.

\subsection{DT$^4$ virtual cycles}
Let $M$ be a $(-2)$-shifted symplectic derived projective scheme\,---\,such as a ``DT$^4$ moduli space" of stable sheaves (or complexes of sheaves) on a Calabi-Yau 4-fold when stable\,=\,semistable.

Any point of $M$ has a local neighbourhood with a ``Darboux chart". This local model is the zero locus $Z(s)$ of an isotropic section of an orthogonal bundle $(\underline E,q)$ over a smooth ambient space $A$ \cite{BBJ}. When $M$ has ``orientation data" $(\underline E,q)$ inherits a complex orientation \cite[Proposition 4.2]{OT1}, making it an $SO(r,\C)$ bundle.

Then $M$ carries a virtual cycle $[M]^{\vir}$ \cite{BJ, OT1} of complex dimension $v=\dim_\C A-\frac12r$ which is zero unless $v\in\Z_{\ge0}$ \cite{OT2}. So from now on we fix $r=2n$.

Suppose now that $\dim M=0=v$.\footnote{This condition can be weakened to $\dim M=v\ge0$ just as in Footnote \ref{insert}.} Then it is natural to ask if the invariant  $\int_{[M]^{\vir}}1$ equals the length of $M$ as a 0-dimensional scheme.
Our examples (\ref{eg}, \ref{run}) show that this need not be the case\,---\,that $M$ can be nonempty but with virtual cycle zero. Furthermore if we can perturb the setup so that $M$ deforms to a union of $P$ positive reduced points and $N$ negative reduced points, then not only is $P-N$ constrained to be $\int_{[M]^{\vir}}1$\vspace{-1mm} but both $P$ and $N$ are fixed mod $2$ by Remark \ref{mod2}. This uses the $\Z/2\Z$ factor of $\pi_{2n-1}\(\wt Q\)$ \eqref{ZZ} and its stability under the stabilisation of Corollary \ref{liver} on replacing $\(\C^n,\,\underline\C^{2n},s\)$ by $\(\C^n\times\C_x,\,\underline\C^{2n}\oplus\underline\C^2,\,s\oplus(x,0)\)$.

\subsection*{$(-2)$-shifted cotangent bundles}
There is one important, special situation, however, where the analogue of \eqref{H3} does hold. That is when
$$
M\=T^*[-2]N
$$
for some quasi-smooth derived scheme $N$. This holds, for instance, if $M$ is the moduli space of compactly supported stable sheaves on the local Calabi-Yau 4-fold $X=K_Y$, where $Y$ is a 3-fold whose moduli space of stable sheaves $N$ is quasi-smooth (for example if $Y$ is Fano).

Then \cite[Section 8]{OT1} $M$ admits local \emph{Darboux} charts $(A,\underline E,s)$ with $\underline E=\underline\Lambda\oplus\underline\Lambda^*$ and $s=(s_1,0)$, where $(A,\underline\Lambda,s_1)$ are local \emph{Kuranishi} charts for $N$. In this situation
$$
v\=\vd(N), \quad [M]^{\vir}\=[N]^{\vir}
$$
(where the latter is the usual Behrend-Fantechi virtual cycle of $N$) and $\dim M=0=v$ happens if and only if $\dim N=0=\vd(N)$, in which case, by \eqref{OH3},
\beq{yifan}
\int_{[M]^{\vir}}1\=\mathrm{length}\,M\=\mathrm{length}\,N
\eeq

\section{Morals}

One moral of this note is that counting sheaves on Calabi-Yau 4-folds\,---\,or indeed counting in any algebro-geometric virtual enumerative setting with a cosection\,---\,behaves more like \emph{real} counting than complex counting. Even though it has an entirely holomorphic (indeed algebraic) definition \cite{OT1}, points of the moduli space of the correct dimension can contribute zero or a negative number, rather like counting zeros of the equation $x^2-a=0$ over $\R$ instead of $\C$.

But when the Calabi-Yau 4-fold is a local 3-fold, the $(-2)$-shifted symplectic structure is a $(-2)$-shifted cotangent bundle, or the cosection vanishes, the theory behaves like the counting of zeros over $\C$.

\appendix
\section{Stabilisation}\label{app}

Here we study how the homotopy group $\pi_{2n-1}\(\wt Q\)\cong\pi_{2n-1}(Q)$ of \eqref{ZZ} stabilises as $n\mapsto n+1$. So we record $n$ as a label and do a trivial rescaling of \eqref{wtQ}, defining
$$
\wt Q_n\ :=\ \big\{(a,b)\ \colon\, |a|^2=1=|b|^2,\ \langle a,b\rangle=0\big\}\ \subset\ \R^{2n}\oplus\R^{2n}.
$$
To relate this to $\wt Q_{n+1}$ we will repeatedly use the elementary fact that for any topological subspace $S\subset V$ of a vector space we may write its double suspension as
\beq{suss}
\Sigma^2S\=\Big\{\(\sqrt{1-|X|^2}\cdot v,\,X\)\ \colon\,v\,\in\,S,\,X\in\R^2,\,|X|\le1\Big\}\ \subset\ V\oplus\R^2.
\eeq
Thus we can define an inclusion $\iota \colon \Sigma^2\wt Q_n\into\wt Q_{n+1}$ by
\beq{ota}
\(\sqrt{1-|X|^2}\cdot (a, b),\,X\) \Mapsto \Big(\(\sqrt{1 - |X|^2}\cdot a,\,X\),\ \(\sqrt{1-|X|^2}\cdot b,\,-j_+X\)\Big),
\eeq
where $j_+$ is the oriented orthogonal complex structure on $\R^2$ (and the minus sign is just as in \eqref{LJ}). We first show this stabilises the classes $\lambda_\pm\in\pi_{2n-1}\(\wt Q_n\)\cong\pi_{2n-1}(Q_n)$ of \eqref{lpm}.

\begin{lem}\label{658}
The map $\iota_* \circ \Sigma^2\colon\pi_{2n-1}\(\wt Q_n\)\To\pi_{2n+1}\(\wt Q_{n+1}\)$ takes $\lambda_\pm$ to $\lambda_\pm$.
\end{lem}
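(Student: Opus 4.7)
The plan is to exhibit explicit cycle representatives of $\lambda_\pm$ on both sides and check that $\iota \circ \Sigma^2$ carries one representative to the other on the nose (up to the sign built into the definition of $\lambda_-$). The computation is essentially an orientation check.

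\textbf{Step 1: Representative cycles.} Pick $J \in \mathfrak{acs}^\pm(\R^{2n})$ and recall from Proposition \ref{prp} that the section $a \mapsto (a,-Ja)$ of $\wt Q_n \to S^{2n-1}$ represents $\pm\lambda_\pm$ in $\pi_{2n-1}(\wt Q_n)$ (the sign being $+$ for $J \in \mathfrak{acs}^+$ and $-$ for $J \in \mathfrak{acs}^-$ by the very definition of $\lambda_-$). On the $(n{+}1)$-side, take $J' := J \oplus j_+$ acting on $\R^{2n+2} = \R^{2n} \oplus \R^2$. Since $j_+$ is orientation-preserving, $J'$ lies in $\mathfrak{acs}^\pm(\R^{2n+2})$ with the same sign as $J$, so the section $v \mapsto (v,-J'v)$ of $\wt Q_{n+1} \to S^{2n+1}$ represents $\pm\lambda_\pm$ as well.

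\textbf{Step 2: Identify the two sections via $\iota \circ \Sigma^2$.} Using the parameterisation \eqref{suss}, the suspension $\Sigma^2$ of the section $a \mapsto (a,-Ja)$ sends
$$(\sqrt{1-|X|^2}\,a,\,X) \Mapsto \bigl(\sqrt{1-|X|^2}\,(a,-Ja),\,X\bigr) \in \Sigma^2 \wt Q_n.$$
Applying $\iota$ from \eqref{ota} yields
$$\Bigl((\sqrt{1-|X|^2}\,a,\,X),\ (-\sqrt{1-|X|^2}\,Ja,\,-j_+ X)\Bigr) \in \wt Q_{n+1}.$$
On the other hand, a point $v=(a',X) \in S^{2n+1} \subset \R^{2n} \oplus \R^2$ with $a' = \sqrt{1-|X|^2}\,a$, $a \in S^{2n-1}$, satisfies
$$(v,-J'v) = \Bigl((\sqrt{1-|X|^2}\,a,\,X),\,(-\sqrt{1-|X|^2}\,Ja,\,-j_+X)\Bigr),$$
matching the previous display \emph{identically}. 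Thus $\iota \circ \Sigma^2$ carries the representative of $\pm\lambda_\pm$ on $\wt Q_n$ to the representative of $\pm\lambda_\pm$ on $\wt Q_{n+1}$.

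\textbf{Step 3: Conclude.} For the $+$ case this is immediate: $\iota_* \Sigma^2 \lambda_+ = \lambda_+$. For the $-$ case the sections on both sides represent $-\lambda_-$, so the equality of cycles gives $\iota_* \Sigma^2(-\lambda_-) = -\lambda_-$, hence $\iota_* \Sigma^2 \lambda_- = \lambda_-$.

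\textbf{Main obstacle.} The only real content is bookkeeping: making sure that (i) the two parameterisations of $S^{2n+1}$ (as $\Sigma^2 S^{2n-1}$ and as the unit sphere in $\R^{2n+2}$) agree up to an orientation-preserving homeomorphism, and (ii) the minus sign built into $\iota$ in \eqref{ota} is exactly what is needed to produce $-J'v$ rather than some twisted variant. Both are forced by the conventions already fixed in \eqref{fib} and \eqref{LJ}, so once the explicit identifications are unwound the proof reduces to the one-line equality of Step 2.
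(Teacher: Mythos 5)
Your proof is correct and follows essentially the same route as the paper: choose $J\in\mathfrak{acs}^\pm(\R^{2n})$, note $\iota\circ\Sigma^2$ carries the section $a\mapsto(a,-Ja)$ to the section $v\mapsto(v,-(J\oplus j_+)v)$, and observe that $J\oplus j_+$ has the same orientation class as $J$. You are slightly more explicit than the paper in Step 3 about tracking the minus sign built into the definition of $\lambda_-$, but the substance is identical.
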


\begin{proof}
Fixing an oriented orthogonal complex structure $J_+$ on $\R^{2n}$, recall from \eqref{fib} that $\lambda_+$ is the class of the section
$$
S^{2n-1}\,\ni\,a\Mapsto(a,-J_+a)\,\in\,\wt Q^n\,\subset\,\R^{2n}\oplus\R^{2n}.
$$
Write $S^{2n+1}\cong\Sigma^2S^{2n-1}$ as $\big\{\(\sqrt{1-|X|^2}\cdot a,X\)\big\}$ as in \eqref{suss}. Then \eqref{ota} shows that $\iota_* \circ \Sigma^2(\lambda_+)$ is the map
\beqa
S^{2n+1}\ \cong\ \Sigma^2S^{2n-1} &\To& \wt Q_{n+1}\ \subset\ \R^{2n+2}\oplus\R^{2n+2}, \\
\(\sqrt{1-|X|^2} \cdot a,\,X\) &\Mapsto& 
\Big(\(\sqrt{1-|X|^2} \cdot a,\,X\),\ \(\sqrt{1-|X|^2}(-J_+a), \,-j_+X\)\Big).
\eeqa
Since $(J_+,j_+)$ is an oriented orthogonal complex structure on $\R^{2n}\oplus\R^2$, this is just $\lambda_+\in\pi_{2n+1}\(\wt Q_{n+1}\)$. The case of $\lambda_-$ is similar, using that $(J_-,j_+)$ is an anti-oriented orthogonal complex structure on $\R^{2n}\oplus\R^2$.
\end{proof}

Let $f_n\colon S^{2n-2}\into\wt Q_n$ denote the inclusion of the fibre $S(a^\perp)$ of the fibration $p$ \eqref{fib} over some fixed $a\in S(\R^{2n})$. Let $\epsilon_n$ be the generator of $\pi_{2n-1}(S^{2n-2})\cong\Z(/2\Z)$. Then $f_n\circ\epsilon_n\!\in\!\pi_{2n-1}\(\wt Q_n\)\!\cong\!\Z\oplus\Z(/2\Z)$ represents $(0,1)$ under the isomorphism \eqref{ZZ}.

\begin{lem}\label{659}
$\iota_* \circ \Sigma^2\colon\pi_{2n-1}\(\wt Q_n\)\To\pi_{2n+1}\(\wt Q_{n+1}\)$ maps $f_n\circ\epsilon_n\!\Mapsto\!f_{n+1}\circ\epsilon_{n+1}$.
\end{lem}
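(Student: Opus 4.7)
The plan is to reduce the lemma to an identification in $\pi_{2n}(\wt Q_{n+1})$, then verify it with an explicit rotation homotopy. By naturality of suspension,
$$
\Sigma^2(f_n\circ\epsilon_n)\=\(\Sigma^2 f_n\)\circ\(\Sigma^2\epsilon_n\)\,\in\,\pi_{2n+1}\(\Sigma^2\wt Q_n\).
$$
Since $\epsilon_n$ generates $\pi_{2n-1}(S^{2n-2})$, Freudenthal shows $\Sigma^2\epsilon_n$ is a generator of $\pi_{2n+1}(S^{2n})\cong\Z/2$, hence equals $\epsilon_{n+1}$ (there is no sign ambiguity in a group of order two, and for $n=2$ the map $\pi_3(S^2)=\Z\onto\pi_5(S^4)=\Z/2$ still sends the Hopf generator to the generator). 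Thus it suffices to prove
\beq{suffto}
\iota\circ\Sigma^2 f_n\ \simeq\ f_{n+1}\ \text{ as maps }\ S^{2n}\To\wt Q_{n+1},
\eeq
after which post-composition with $\epsilon_{n+1}$ yields the claim.

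Identify $\Sigma^2 S^{2n-2}=S(a^\perp\oplus\R^2)=S^{2n}$ by writing points as $(c,X)$ with $c\in a^\perp$ and $|c|^2+|X|^2=1$, so that the parametrization of \eqref{suss} corresponds to $c=\sqrt{1-|X|^2}\,b$. Take the fibre base point of $f_{n+1}$ to be $(a,0)\in S^{2n+1}$, giving $f_{n+1}(c,X)=\big((a,0),(c,X)\big)$, so that \eqref{ota} reads
$$
\iota\circ\Sigma^2 f_n\,\colon\,(c,X)\Mapsto\Big(\(\sqrt{1-|X|^2}\,a,\,X\),\ (c,-j_+X)\Big).
$$
For $X\ne0$ let $R_{t,X}\in SO(\R^{2n+2})$ denote the rotation through angle $-t\arcsin|X|$ in the 2-plane spanned by $(a,0)$ and $(0,X/|X|)$, fixing its orthogonal complement (and extended by the identity at $X=0$). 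Define
$$
H_t(c,X)\ :=\ \Big(R_{t,X}\(\sqrt{1-|X|^2}\,a,X\),\ R_{t,X}(c,-j_+X)\Big).
$$
Orthogonality of $R_{t,X}$ preserves both unit norms and the inner product between the two components, so $H_t$ lands in $\wt Q_{n+1}$; on the collapsed locus $|X|=1$ (where $c=0$) the formula depends only on $X$, so $H_t$ descends to $\Sigma^2 S^{2n-2}$.

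At $t=0$ we recover $\iota\circ\Sigma^2 f_n$, and at $t=1$ the first component becomes $(a,0)$. The crucial observation is that $(c,0)$ lies in $a^\perp\oplus\{0\}$ and $(0,-j_+X)$ lies in $\{0\}\oplus\R\<j_+X\>$, both of which are orthogonal to the rotation 2-plane; therefore $R_{1,X}$ fixes both vectors, and
$$
H_1(c,X)\=\big((a,0),\,(c,-j_+X)\big)\=f_{n+1}\circ\(\mathrm{id}_{a^\perp}\oplus(-j_+)\)(c,X).
$$
Since $\mathrm{id}_{a^\perp}\oplus(-j_+)$ is orthogonal of determinant $+1$, it is an orientation-preserving self-map of $S^{2n}$, hence homotopic to the identity. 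This proves \eqref{suffto}. The one non-routine step is the observation that both $(c,0)$ and $(0,-j_+X)$ are perpendicular to the rotation plane, forcing the endpoint to be an orthogonal self-map of the fibre of degree $+1$ rather than one of zero or higher degree; everything else is a direct orthogonal calculation.
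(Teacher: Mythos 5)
Your proof is correct and takes essentially the same route as the paper: reduce via $\Sigma^2\epsilon_n=\epsilon_{n+1}$ to showing $\iota\circ\Sigma^2 f_n\simeq f_{n+1}$, construct a homotopy carrying the first component of the pair to $(a,0)$ while the second stays fixed, and finish by homotoping $-j_+$ to the identity in $SO(2)$. The only cosmetic difference is that you package the first homotopy as a family of rotations $R_{t,X}$ rather than writing the explicit interpolated formula $(\sqrt{1-t^2|X|^2}\,a,\,tX)$ as the paper does.
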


\begin{proof}
Using the standard fact that $\Sigma^2\epsilon_n=\epsilon_{n+1}$,
$$
\iota_*\circ\Sigma^2\(f_n\circ\epsilon_n\)\=
\iota_*\circ\Sigma^2\(f_n\)\circ\epsilon_{n+1},
$$
so it is sufficient to prove that $\iota_*\circ\Sigma^2\(f_n\)$ is homotopic to $f_{n+1}$. By \eqref{suss} and \eqref{ota},
\beqa
\iota_*\circ\Sigma^2\(f_n\)\,\colon\,\Sigma^2S(a^\perp) &\To& \wt Q_{n+1}\ \subset\ \R^{2n+2}\oplus\R^{2n+2}, \\
\(\sqrt{1-|X|^2} \cdot b,\,X\) &\Mapsto& 
\Big(\(\sqrt{1-|X|^2} \cdot a,\,X\),\ \(\sqrt{1-|X|^2}\cdot b, \,-j_+X\)\Big),
\eeqa
where $a$ is fixed, $b$ ranges over $S(a^\perp)$ and $X$ ranges over $\R^2$ with $|X|\le1$. Now homotope from $t=1$ to $t=0$ along the maps
$$
\Big(\(\sqrt{1-t|X|^2} \cdot a,\,tX\),\ \(\sqrt{1-|X|^2}\cdot b, \,-j_+X\)\Big),
$$
noting they also lie in $\wt Q_{n+1}$. The result is the map
$$
\Big(\(a,\,0\),\ \(\sqrt{1-|X|^2}\cdot b, \,-j_+X\)\Big)\ \sim\ \Big(\(a,\,0\),\ \(\sqrt{1-|X|^2}\cdot b,\,X\)\Big)
$$
by homotoping $-j_+$ to $\id_{\R^2}$ along isometries of $\R^2$. But this is $f_{n+1}$, the fibre of \eqref{fib} over $(a,0)\in S(\R^{2n}\oplus\R^2)$.
\end{proof}

\begin{cor}\label{liver}
For all $n$ we have $\lambda_-=(-1,1)\in\pi_{2n-1}\(\wt Q_n\)\cong\Z\oplus\Z(/2\Z)$.
\end{cor}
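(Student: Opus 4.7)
The plan is to induct on $n\geq 2$: Lemmas \ref{658} and \ref{659} give the inductive step, while the base case $n=2$ is handled via the explicit calculations of Section \ref{refine}.

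For the base case, I would compute both $\lambda_+$ and $\lambda_-$ directly as elements of $\pi_3(Q) = \pi_3\bigl(\PP(\Lambda_+)\times\PP(\Lambda_-)\bigr)=\Z\oplus\Z\ni(d_1,d_2)$ using the description \eqref{arr} of $Q$. For $[v]\in\PP(\Lambda_+)\subset Q$ one has $\Lambda_v^+=\Lambda_+$ and $\Lambda_v^-\cap\Lambda_+=\langle v\rangle$, so the composition $S(\Lambda_+)\subset\wt Q_2\to Q\cong\PP(\Lambda_+)\times\PP(\Lambda_-)$ is the Hopf fibration onto the first factor and constant on the second. With the complex orientations fixed in Section \ref{hg} the Hopf invariant is $+1$, so $\lambda_+\leftrightarrow(d_1,d_2)=(1,0)$ and symmetrically $\lambda_-\leftrightarrow(0,1)$. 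Now \eqref{RH1}---proved in Section \ref{refine} without reference to this corollary---converts to the $(s_1,s_2)$-splitting of \eqref{ZZ} via $(s_1,s_2)=(d_1-d_2,d_2)$, yielding $\lambda_-=(-1,1)$, i.e.\ $k=1$ when $n=2$.

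For the inductive step, assume $\lambda_-=(-1,1)$ in $\pi_{2n-1}(\wt Q_n)$ and apply $\iota_*\circ\Sigma^2$. By Lemma \ref{658} the result is $\lambda_-\in\pi_{2n+1}(\wt Q_{n+1})$, so it suffices to check that $\iota_*\Sigma^2$ is compatible with the \eqref{ZZ} decomposition. The base factor is preserved because $p_{n+1}\circ\iota=\Sigma^2 p_n$ (immediate from \eqref{ota}) and $\Sigma^2$ is the degree-preserving isomorphism $\pi_{2n-1}(S^{2n-1})\to\pi_{2n+1}(S^{2n+1})$; equivalently, Lemma \ref{658} applied to $\lambda_+=(1,0)$ confirms this. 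The fibre factor is preserved by Lemma \ref{659}, noting that the suspension $\Z=\pi_3(S^2)\to\pi_5(S^4)=\Z/2$ sends $1\mapsto 1$ for the $n=2\to3$ transition and is the identity $\Z/2\to\Z/2$ thereafter. Hence $\iota_*\Sigma^2(-1,1)=(-1,1)$ in all cases, closing the induction.

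The main obstacle is the sign bookkeeping in the base case: one must confirm that $S(\Lambda_\pm)\to\PP(\Lambda_\pm)$ represents $+1$ rather than $-1$ in $\pi_3(S^2)=\Z$ under the orientation conventions of Proposition \ref{prp} and the identifications of Section \ref{refine}. A sign slip here would propagate incorrectly through \eqref{RH1}. Once the signs in $\lambda_\pm\leftrightarrow(1,0),(0,1)$ are pinned down, the rest of the argument is essentially formal.
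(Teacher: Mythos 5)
Your proposal is correct and follows the paper's own proof closely. The inductive step is identical: Lemmas \ref{658} and \ref{659} show $\iota_*\circ\Sigma^2$ is the identity in the basis $\lambda_+=(1,0)$, $f_n\circ\epsilon_n=(0,1)$ of \eqref{ZZ}, and also fixes $\lambda_-$, so it preserves the relation $\lambda_-=(-1,1)$. For the base case $n=2$ the paper simply cites \eqref{mat}, whereas you detour through the $(d_1,d_2)$ basis of Section \ref{refine} and invert \eqref{RH1}; this is not circular (since \eqref{RH1} is deduced from \eqref{mat} and \eqref{switch}, not from this corollary) and amounts to the same underlying computation. The one imprecision worth naming is in the sign claim: the composition $S^3\to S(\Lambda_J)\to\PP(\Lambda_J)$ has Hopf invariant $+1$ for $J\in\mathfrak{acs}^+$ but $-1$ for $J\in\mathfrak{acs}^-$, because by Proposition \ref{prp} the first map reverses orientation in the latter case; it is the extra minus sign in the definition \eqref{lpm} of $\lambda_-$ that then restores $\lambda_-\leftrightarrow(0,1)$, rather than a literal symmetry with $\lambda_+$. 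You flagged the sign bookkeeping as the delicate point, and indeed this is exactly where the care is needed, but the conclusion stands.
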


\begin{proof}
By Lemmas \ref{658} and \ref{659} the map $\iota_*\circ\Sigma^2\colon\pi_{2n-1}\(\wt Q_n\)\To\pi_{2n+1}\(\wt Q_{n+1}\)$ is
$$
\mat1001\colon\,\Z\oplus\Z(/2\Z)\ \To\ \Z\oplus (\Z/2\Z)
$$
with respect to the basis $\lambda_+=(1,0),\ f_n\circ\epsilon_n=(0,1)$ of \eqref{ZZ}. It also maps $\lambda_-\mapsto\lambda_-$ and so preserves the relation $\lambda_-=(-1,1)$; this holds for $n=2$ by \eqref{mat} so it holds for all $n$ by induction.
\end{proof}

Combining Corollary \ref{liver} with \eqref{ZZ} and \eqref{Hur} shows that for $n>2$ the Hurewicz map sits in the following commutative diagram of short exact sequences
\beq{hur3}
\xymatrix@R=14pt@C=20pt{
&&&&&& \Z/2\Z \ar[d] \\
0 \ar[r]& \Z \ar[d]_2\ar[rr]^-{(2,2)}&& \Z\oplus\Z \ar@{=}[d]\ar[rrr]^-{\lambda_+\oplus\lambda_-}&&& \pi_{2n-1}\(\wt Q_n\) \ar[d]\ar[r]& 0 \\
0 \ar[r]& \Z \ar[d]\ar[rr]^-{(1,1)}&& \Z\oplus\Z \ar[rrr]^-{[S(\Lambda_+)]\oplus [S(\Lambda_-)]}&&& H_{2n-1}\(\wt Q_n\) \ar[r]& 0.\!\! \\
& \Z/2\Z}
\eeq

\subsection*{Acknowledgements} We are grateful to Nick Kuhn for suggesting \eqref{OH7}, and to him, Paolo Aluffi, Franc Forstneri\v c, Achim Krause, Richard L\"ark\"ang, Samuel Mu\~noz-Ech\'aniz, Sasha Polishchuk and Yifan Zhao for useful conversations. MK acknowledges support from NWO Grant VI.Vidi.192.012 and ERC Consolidator Grant FourSurf 101087365. JO was supported by the New Faculty Startup Fund of Seoul National University and a National Research Foundation of Korea grant funded by the Korean government (MSIT)(RS-2024-00339364).
JVR was supported by Research Council of Norway grant no.~302277.
RT was supported by a Royal Society research professorship.

\bigskip

\noindent \begin{tabular}{lcl}
{\tt{m.kool1@uu.nl}} && {\tt{jeongseok@snu.ac.kr}} \smallskip \\
Department of Mathematics && Department of Mathematical \\
Utrecht University && Sciences and RIM \\
PO Box 80010 && Seoul National University \\
3508 TA Utrecht && Seoul 08826 \\
The Netherlands &$\qquad$& Korea \\ \\
{\tt{jorgeren@math.uio.no}} && {\tt{richard.thomas@imperial.ac.uk}}\smallskip \\
Department of Mathematics && Department of Mathematics \\
University of Oslo && Imperial College London \\
PO Box 1053 Blindern && London SW7 2AZ \\
0316 Oslo, Norway && United Kingdom
\end{tabular}

\end{document}